\theoremstyle{definition}
\newtheorem{thm}{Theorem}[section]
\newtheorem{lem}[thm]{Lemma}
\newtheorem{prop}[thm]{Proposition}
\theoremstyle{definition}
\newtheorem{rem}[thm]{Remark}
\numberwithin{equation}{section}
\def\Q{{\mathbb Q}}
\def\R{{\mathbb R}}
\def\Z{{\mathbb Z}}
\def\C{{\mathbb C}}
\def\PP{{\mathbb P}}
\def\Br{\mathop{\mathrm{Br}}\nolimits}
\def\Gal{\mathop{\mathrm{Gal}}\nolimits}
\def\Jac{\mathop{\mathrm{Jac}}\nolimits}
\def\Ker{\mathop{\mathrm{Ker}}\nolimits}
\def\GL{\mathop{\mathrm{GL}}\nolimits}
\def\Pic{\mathop{\mathrm{Pic}}\nolimits}
\def\det{\mathop{\mathrm{det}}\nolimits}
\def\divi{\mathop{\mathrm{div}}}
\begin{document}

\title{The arithmetic of a twist of the Fermat quartic}

\author{Yasuhiro Ishitsuka}
\address{
Institute of Mathematics for Industry, Kyushu University,
Fukuoka, 819-0395, Japan}
\email{yishi1093@gmail.com}

\author{Tetsushi Ito}
\address{Department of Mathematics, Faculty of Science, Kyoto University, Kyoto 606-8502, Japan}
\email{tetsushi@math.kyoto-u.ac.jp}

\author{Tatsuya Ohshita}

\address{Department of Mathematics, 
Cooperative Faculty of Education, Gunma University,
Maebashi, Gunma 371-8510, Japan}
\email{ohshita@gunma-u.ac.jp}

\date{\today}
\subjclass[2020]{Primary 11D41; Secondary 14H50, 14K15, 14K30}
\keywords{Fermat quartic, Jacobian varieties, Galois representation}

\begin{abstract}
We study the arithmetic of the twist of the Fermat quartic defined by
$X^4 + Y^4 + Z^4 = 0$ which has no $\Q$-rational point.
We calculate the Mordell--Weil group of the Jacobian variety explicilty.
We show that the degree $0$ part of the Picard group is a free $\Z/2\Z$-module of rank $2$,
whereas the Mordell--Weil group is a free $\Z/2\Z$-module of rank $3$.
Thus the relative Brauer group is non-trivial.
We also show that this quartic violates the local-global property for
linear determinantal representations.
\end{abstract}

\maketitle

\section{Introduction}

The aim of this paper is to study the arithmetic of
the smooth plane quartic
\[
C_4 \coloneqq \{\, [X : Y : Z] \in \PP^2 \mid X^4 + Y^4 + Z^4 = 0 \,\}.
\]
It is a twist of the Fermat quartic.
The quartic $C_4$ has no $\Q$-rational point because it has no $\R$-rational point.
We shall explicitly calculate the Picard group $\Pic(C_4)$
and the Mordell--Weil group of the Jacobian variety $\Jac(C_4)$.
Theses results rely on our previous calculation of
the Mordell--Weil group
of the Jacobian variety of the Fermat quartic over $\Q(\zeta_8)$
summarized in \cite{IshitsukaItoOhshita:FermatQuartic}.

Define the divisors $D_i$ ($0 \leq i \leq 3$) on the quartic $C_4$ by
\begin{align*}
D_0 &\coloneqq [1 :  \zeta_3 :  \zeta^2_3] + [1 :  \zeta^2_3 :  \zeta_3] &
D_1 &\coloneqq [1 : -\zeta_3 :  \zeta^2_3] + [1 : -\zeta^2_3 :  \zeta_3] \\
D_2 &\coloneqq [1 :  \zeta_3 : -\zeta^2_3] + [1 :  \zeta^2_3 : -\zeta_3] &
D_3 &\coloneqq [1 : -\zeta_3 : -\zeta^2_3] + [1 : -\zeta^2_3 : -\zeta_3]
\end{align*}
Here we put $\zeta_n \coloneqq \exp(2 \pi \sqrt{-1}/n)$.
The divisors $D_0, D_1, D_2, D_3$ are defined over $\Q$, and
twice of them are divisors cut out by bitangents defined over $\Q$; 
see Section \ref{Section:Bitangents}
for details.
The divisor classes of the differences $[D_i - D_0]$ ($1 \leq i \leq 3$) are elements of $\Pic^0(C_4)$ killed by $2$.
They satisfy the following relation;
see Lemma \ref{Lemma:DivisorClassBitangent}
 (3).
\[
  [D_1 - D_0] + [D_2 - D_0] = [D_3 - D_0].
\]
We also consider the following divisor defined over $\Q(\zeta_8)$
\[
  E \coloneqq 2 [1 :  0 :  \zeta^3_8] - 2 [1 :  0 :  \zeta^7_8].
\]
We shall show that $E$ is not linearly equivalent to a divisor defined over $\Q$,
but the divisor class $[E]$
is invariant under the action of $\Gal(\Q(\zeta_8)/\Q)$.
Thus it gives a $\Q$-rational point of Jacobian variety $\Jac(C_4)$.

The following theorem is the main theorem of this paper.

\begin{thm}
\label{MainTheorem}
\begin{enumerate}
\item
The degree $0$ part $\Pic^0(C_4)$ of the Picard group 
is a free $\Z/2\Z$-module of rank $2$ generated by
$[D_1 - D_0]$ and $[D_2 - D_0]$:
\[
  \Pic^0(C_4) = (\Z/2\Z)[D_1 - D_0] \oplus (\Z/2\Z)[D_2 - D_0].
\]

\item
The Mordell--Weil group $\Jac(C_4)(\Q)$
is a free $\Z/2\Z$-module of rank $3$ generated by
$[D_1 - D_0]$, $[D_2 - D_0]$, and $[E]$:
\[
  \Jac(C_4)(\Q) =
    (\Z/2\Z)[D_1 - D_0] \oplus (\Z/2\Z)[D_2 - D_0]
    \oplus (\Z/2\Z)[E].
\]
\end{enumerate}
\end{thm}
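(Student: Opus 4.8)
The plan is to reduce both parts to the Jacobian of the Fermat quartic over $\Q(\zeta_8)$, which was computed in \cite{IshitsukaItoOhshita:FermatQuartic}. Write $L\coloneqq\Q(\zeta_8)$. Over $L$ there is an isomorphism $\phi\colon C_4\xrightarrow{\ \sim\ }F$ onto the Fermat quartic $F$, obtained by scaling one projective coordinate by a root of $T^4=-1$; for $\sigma\in G_\Q$ the automorphism $c_\sigma\coloneqq\phi^{\sigma}\circ\phi^{-1}$ of $F$ depends only on $\sigma|_L$ and lies in the subgroup $\{[U:V:W]\mapsto[\zeta U:V:W]\mid\zeta\in\mu_4\}$ of $\Aut(F)$. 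Since $c_\sigma=\id$ for $\sigma\in G_L$, a $G_\Q$-invariant divisor class on $C_4$ over $\overline{\Q}$ is automatically $L$-rational, and unwinding the definitions gives
\[
  \Jac(C_4)(\Q)=\bigl\{\,x\in\Jac(F)(L)\ \bigm|\ c_\sigma\bigl({}^{\sigma}x\bigr)=x\ \text{for all}\ \sigma\in\Gal(L/\Q)\,\bigr\}.
\]
So everything rests on the finite group $\Jac(F)(L)$, known explicitly with its Galois action, together with the explicit action of $\mu_4$ on it.

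For part (2) I would first transport $D_0, D_1, D_2$ and $E$ through $\phi$ and identify the classes $\phi_*[D_i - D_0]$ and $\phi_*[E]$ among the known generators of $\Jac(F)(L)$, using the bitangent analysis of Section~\ref{Section:Bitangents} for the former, and for the latter the fact that the line $Z = \zeta_8^j X$ meets $C_4$ only at $[1:0:\zeta_8^j]$ with multiplicity $4$ — so the classes $4[1:0:\zeta_8^j]$ ($j$ odd) all coincide with a line section — whence $[E]$ is a $2$-torsion class supported at such points. Because $\Jac(F)(L)$ is finite, $\Jac(C_4)(\Q)$ is finite (in particular the Mordell--Weil rank is $0$), and the twisted invariants can then be computed by a finite calculation over $\F_2$: one checks that $[D_1 - D_0], [D_2 - D_0], [E]$ are twisted-invariant and $\F_2$-linearly independent, and that no further element of $\Jac(F)(L)$ survives into the twisted invariants; this gives the stated presentation.

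For part (1), Hochschild--Serre for $\mathbb{G}_m$ on $C_4$ together with Hilbert 90 yields the exact sequence
\[
  0\longrightarrow\Pic(C_4)\longrightarrow\bigl(\Pic\,\overline{C_4}\bigr)^{G_\Q}\xrightarrow{\ \delta\ }\Br(\Q)\longrightarrow\Br(C_4),
\]
in which the image of $\delta$ equals the relative Brauer group $\Br(C_4/\Q)=\ker(\Br(\Q)\to\Br(C_4))$; restricting to degree $0$, where $\bigl(\Pic^0\,\overline{C_4}\bigr)^{G_\Q}=\Jac(C_4)(\Q)$, one has $\Pic^0(C_4)=\ker\bigl(\delta|_{\Jac(C_4)(\Q)}\bigr)$. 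The divisors $D_0,\dots,D_3$ are defined over $\Q$ and have equal degree, so $[D_i-D_0]\in\Pic^0(C_4)$; together with part (2) and the relation $[D_1-D_0]+[D_2-D_0]=[D_3-D_0]$ this gives
\[
  (\Z/2\Z)[D_1-D_0]\oplus(\Z/2\Z)[D_2-D_0]\ \subseteq\ \Pic^0(C_4)\ \subseteq\ \Jac(C_4)(\Q)=(\Z/2\Z)^{3},
\]
and it remains to prove $[E]\notin\Pic^0(C_4)$, that is $\delta([E])\neq0$.

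This last step will be the hard part. For orientation I would first record the local points of $C_4$: it has no point over $\R$, over $\Q_2$, or over $\Q_5$ (short congruence-valuation arguments — e.g., for a $2$-adic or $5$-adic unit $x$ the value $x^4$ lies in a very small subgroup, so three quartic monomials cannot sum to $0$), while $C_4(\Q_p)\neq\emptyset$ for every other $p$; consequently any nonzero class of $\Br(C_4/\Q)$ — in particular $\delta([E])$, if it is nonzero — is unramified outside $\{2,5,\infty\}$, which tells us where to look. To evaluate $\delta([E])$ I would make the Galois invariance of $[E]$ effective: choose rational functions $f_\sigma\in L(C_4)^\times$ with $\operatorname{div}(f_\sigma)={}^{\sigma}E-E$ for $\sigma\in\Gal(L/\Q)$, so that $\delta([E])$ is represented by the $2$-cocycle $(\sigma,\tau)\mapsto f_\sigma\cdot{}^{\sigma}f_\tau\cdot f_{\sigma\tau}^{-1}\in L^\times$ in $H^2(\Gal(L/\Q),L^\times)\hookrightarrow\Br(\Q)$, and then prove this class is nontrivial by computing a local invariant at a convenient place — say $v=2$, where $L/\Q$ is totally ramified, or $v=\infty$. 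The real obstacle is the explicit algebra behind this: producing the $f_\sigma$ requires the non-obvious linear equivalences between $E$ and its Galois conjugates (for example $2[1:0:\zeta_8^5]+2[1:0:\zeta_8^7]\sim2[1:0:\zeta_8]+2[1:0:\zeta_8^3]$), and it is precisely the constants accumulated in these equivalences that separate ``$[E]$ is a $\Q$-rational divisor class'' from ``$[E]$ is represented by a $\Q$-rational divisor.'' Everything else — the two exact sequences, the twisting dictionary, and the finite-group computation — is formal or mechanical once \cite{IshitsukaItoOhshita:FermatQuartic} is in hand.
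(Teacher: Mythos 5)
Your overall architecture coincides with the paper's: compute $\Jac(C_4)(\Q)$ as the twisted Galois invariants inside $\Jac(F_4)(\Q(\zeta_8))\cong(\Z/4\Z)^{\oplus 5}\oplus\Z/2\Z$, then use the exact sequence $0\to\Pic^0(C_4)\to\Jac(C_4)(\Q)\to\Br(C_4/\Q)$ to pin down $\Pic^0(C_4)$ between the $(\Z/2\Z)^{\oplus 2}$ coming from the rational bitangents and the $(\Z/2\Z)^{\oplus 3}$ of part (2). The finite $\F_2$-computation you defer for part (2) is indeed mechanical: the paper writes the actions of $\sigma_3,\sigma_5$ as explicit $6\times 6$ matrices $s_3,s_5$ over $\Z/4\Z$ in the basis $e_1,\dots,e_6$, finds that $\Ker(s_3-1)\cap\Ker(s_5-1)$ is generated by $2e_1+2e_2$, $2e_3$, $2e_4$, and matches these with $[D_3-D_0]$, and with $[D_1-D_0]=2e_3+2e_4$, $[E]=2e_4$ via computer-verified linear equivalences. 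Your observation that $4[1:0:\zeta_8^j]$ is a line section (so $[E]$ is $2$-torsion) is correct and consistent with the paper's identity $2E=\divi\bigl((X-\zeta_8^5Z)/(X-\zeta_8Z)\bigr)$.

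The one place where your write-up stops short of a proof is exactly the step you flag: showing $\delta([E])\neq 0$. You correctly describe the method (choose $f_\sigma$ with $\divi(f_\sigma)={}^{\sigma}E-E$, form the inflated $2$-cocycle, evaluate a local invariant), but without producing the $f_\sigma$ and evaluating the cocycle, part (1) is not established — and this is the only step in the whole argument that is neither formal nor a finite group computation. The paper carries it out at the real place, which is the cleaner of your two suggested options: with $\tau=\sigma_3\sigma_5$ the complex conjugation, one has $\tau(E)=-\sigma_3(E)$ and may take $u_\tau=Y^2/\bigl((X-\zeta_8Z)(X-\zeta_8^3Z)\bigr)$, whose divisor is $E-\tau(E)$; the cocycle value is then $a_{\tau,\tau}=u_\tau\cdot\tau(u_\tau)=Y^4/(X^4+Z^4)=-1$ on the curve (using $X^4+Z^4=-Y^4$), a nonzero real number that is not a norm from $\C$, so $\delta([E])$ restricts to the nontrivial class of $\Br(\R)$. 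Your auxiliary remarks on local points (no points over $\R$, $\Q_2$, $\Q_5$) are correct but are not needed once this single local invariant is computed; if you keep your outline, you must supply this explicit function and the evaluation, since ``the constants accumulated in the linear equivalences'' that you rightly identify as the crux are precisely what the computation $Y^4/(X^4+Z^4)=-1$ extracts.
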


Let $\Pic_{C_4/\Q}$ be the Picard scheme of $C_4$ over $\Q$
which represents the relative Picard functor;
see \cite[Section 8.2]{BoschLuetkebohmertRaynaud}.
The degree $0$ part $\Pic^0_{C_4/\Q}$ is the Jacobian variety $\Jac(C_4)$,
and the degree $d$ part $\Pic^d_{C_4/\Q}$ is a torsor under $\Jac(C_4)$ for any $d \in \Z$.
Since $C_4$ has a divisor of degree $2$ over $\Q$ (such as $ D_i$ for $0 \leq i \leq 3$),
the scheme $\Pic^{2d}_{C_4/\Q}$ has a $\Q$-rational point,
and it is a trivial torsor under $\Jac(C_4)$.

We shall show that
$\Pic^{2d+1}_{C_4/\Q}$ is a non-trivial torsor under $\Jac(C_4)$ for any $d \in \Z$.

\begin{thm}
\label{MainTheorem:NontrivialTorsor}
Let $d \in \Z$ be an integer.
The scheme $\Pic^{2d+1}_{C_4/\Q}$ has no $\Q$-rational point.
In other words, $\Pic^{2d+1}_{C_4/\Q}$ is a non-trivial torsor under $\Jac(C_4)$.
\end{thm}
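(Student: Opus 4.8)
The plan is to show that $\Pic^{2d+1}_{C_4/\Q}$ has no $\Q$-rational point by combining the explicit description of the Mordell--Weil group in Theorem~\ref{MainTheorem} with a local obstruction at the real place. Since $\Pic^{2d+1}_{C_4/\Q}$ and $\Pic^1_{C_4/\Q}$ differ by the class of a degree~$2d$ divisor defined over $\Q$ (for instance $d\cdot D_0$), it suffices to treat the case $d=0$, i.e.\ to show $\Pic^1_{C_4/\Q}(\Q) = \emptyset$. The key point is that a $\Q$-rational point of $\Pic^1_{C_4/\Q}$ is a $\Gal(\overline{\Q}/\Q)$-invariant divisor class of degree~$1$ on $C_{4,\overline{\Q}}$, and we must rule this out.

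First I would record the structure of the torsor: the set $\Pic^1_{C_4/\Q}(\Q)$, if non-empty, is a coset of $\Jac(C_4)(\Q) = \Pic^0_{C_4/\Q}(\Q)$ inside $\Pic^1_{C_4/\Q}(\overline{\Q})^{\Gal(\overline{\Q}/\Q)}$. By Theorem~\ref{MainTheorem}(2) this group has order~$8$. So I would try to produce, over $\overline{\Q}$ or over a suitable finite extension, a complete list of the eight $\Gal$-invariant classes of degree~$1$ (these exist because $C_4$ has points over $\overline{\Q}$, hence degree-$1$ classes there, and the obstruction to descending the torsor is what we are computing), and then check that none of them is actually fixed by the full Galois group --- equivalently, check that the class in $H^1(\Q, \Jac(C_4))$ represented by $\Pic^1_{C_4/\Q}$ is non-zero. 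The cleanest incarnation of this is the real-place obstruction: I would show that $\Pic^1_{C_4/\Q_\infty}(\R) = \emptyset$, i.e.\ there is no $\Gal(\C/\R)$-invariant degree-$1$ divisor class on $C_{4,\C}$. Since $C_4(\R) = \emptyset$, complex conjugation acts freely on $C_4(\C)$, so every effective divisor class fixed by conjugation has even degree; more precisely one analyses the action of complex conjugation on $\Pic^1_{C_4/\C}$ and shows the fixed locus is empty, for instance by using that $\Pic^1/\Pic^0$-torsor-triviality over $\R$ would force a real divisor class of degree~$1$, whose ``effective representative'' parity argument (Riemann--Roch together with $C_4(\R)=\emptyset$) gives a contradiction. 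This real obstruction then immediately implies $\Pic^{2d+1}_{C_4/\Q}(\Q)=\emptyset$, since a $\Q$-point would give an $\R$-point.

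Concretely, I would argue as follows. Suppose $\Pic^1_{C_4/\Q}$ has a $\Q$-rational point $c$. Base-changing to $\R$, we get $c \in \Pic^1_{C_4/\R}(\R)$. Because $\R$ has cohomological dimension issues only at $2$-torsion and the curve has no real point, I would invoke the fact (an immediate consequence of Riemann--Roch and $C_4(\R)=\emptyset$, or equivalently Witt's theorem on the parity of divisor classes on curves without real points) that every $\Gal(\C/\R)$-invariant divisor class on $C_{4,\C}$ that is represented by an $\R$-rational divisor has even degree. Hence $\deg c$ would be even, contradicting $\deg c = 1$. The subtle step --- and the main obstacle --- is justifying that a $\Q$- (hence $\R$-) point of the Picard \emph{scheme} $\Pic^1_{C_4/\R}$ is represented by an honest divisor defined over $\R$: in general the natural map $\Div^1(C_4)(\R) \to \Pic^1_{C_4/\R}(\R)$ need not be surjective, its cokernel being controlled by $\Br(\R) = \Z/2\Z$. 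So the real content is to check that this Brauer obstruction does \emph{not} save us, i.e.\ that even allowing for a possibly non-representable class, no element of $\Pic^1_{C_4/\R}(\R)$ exists; this follows because $C_4$ has an $\R$-rational divisor of degree~$2$ (any $D_i$), which kills the relevant period-index discrepancy and forces any degree-$1$ $\R$-point of the Picard scheme to come from an $\R$-divisor of degree~$1$, which cannot exist.

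An alternative, and perhaps more self-contained, route avoids the real place entirely and works with the explicit generators. Using Theorem~\ref{MainTheorem}, $\Jac(C_4)(\Q)$ is generated by $[D_1-D_0]$, $[D_2-D_0]$, $[E]$; I would pick a specific class $\delta \in \Pic^1_{C_4/\Q}(\overline{\Q})$ of degree~$1$ (for example half of a well-chosen hyperplane section minus a known divisor, or a $\overline{\Q}$-point of $C_4$), compute the $1$-cocycle $\sigma \mapsto \sigma(\delta) - \delta \in \Jac(C_4)(\overline{\Q})[2]$, and show it is not a coboundary by pairing against the explicit $2$-torsion description. The hard part in that approach is the explicit cocycle computation and identifying its class in $H^1(\Q, \Jac(C_4)[2])$; the real-place argument above is shorter and is the one I would present, with the explicit computation relegated to a remark.
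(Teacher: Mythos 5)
Your reduction to the case $d=0$ via the degree~$2$ divisor $D_0$ is exactly what the paper does, and the real-place strategy is in principle viable for this curve. But the step you yourself flag as ``the main obstacle'' is where the proof breaks: you claim that the existence of an $\R$-rational divisor of degree~$2$ ``kills the relevant period--index discrepancy and forces any degree-$1$ $\R$-point of the Picard scheme to come from an $\R$-divisor of degree~$1$.'' That implication is false as a general principle. The conic $\{x^2+y^2+z^2=0\}$ over $\R$ has no real point and carries a real divisor of degree~$2$, yet $\Pic^1(\R)$ is a single point and that point is \emph{not} represented by any $\R$-rational divisor; the nontrivial element of $\Br(\R)$ is realised precisely in odd degree there. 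In general, when $C(\R)=\emptyset$ Witt's theorem forces $\Br(C/\R)=\Z/2\Z$, so the quotient $\Pic_{C/\R}(\R)/\Pic(C_{\R})$ is always nontrivial, and the whole question is whether the non-representable coset sits in even or in odd degree. Your argument never decides this, so it does not rule out a class $c\in\Pic^1_{C_4/\R}(\R)$ with $\Br(c)\neq 0$.

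What closes the gap is an extra computation: one must exhibit a \emph{degree-zero} class whose Brauer obstruction over $\R$ is already nontrivial. This is exactly the content of Proposition \ref{Proposition:BrauerObstruction}(2), where the $2$-cocycle of $[E]$ is computed and shown to give the quaternion class in $\Br(\R)$. Granting that, every element of $\Pic_{C_4/\R}(\R)$ lies in $\Pic(C_{4,\R})$ or in $[E]+\Pic(C_{4,\R})$, hence has even degree, and your parity argument then applies. (Alternatively one could cite the Gross--Harris classification of $\Pic^n(\R)$ for real curves of odd genus without real points, but that is a genuinely nontrivial input, not a formal consequence of having a degree-$2$ divisor.) Note also that the paper's own proof avoids the real place entirely: it uses the explicit matrices $s_3,s_5$ over $\Q(\zeta_8)$ to show that none of $\sigma_3,\sigma_5,\sigma_3\sigma_5$ fixes any point of $\Pic^1_{C_4/\Q}(\Q(\zeta_8))$ (Proposition \ref{Proposition:Pic1}), which is closer to the ``explicit cocycle'' alternative you sketch and then set aside.
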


\begin{rem}
We have the following exact sequence
\[
\xymatrix{
0 \ar^-{}[r] & \Pic^0(C_4) \ar^-{\iota_{C_4}}[r] & \Jac(C_4)(\Q) \ar^-{\Br}[r] & \Br(C_4/\Q). }
\]
Here
\[ \Br(C_4/\Q) \coloneqq \Ker(\Br(\Q) \to \Br(C_4)) \]
is the relative Brauer group;
see \cite[Theorem 2.1]{CiperianiKrashen},
\cite[Section 8.1, Proposition 4]{BoschLuetkebohmertRaynaud}.
Theorem \ref{MainTheorem} shows that
$\Br([E])$ is non-trivial in $\Br(C_4/\Q)$.
Thus $\Br(C_4/\Q)$ is non-trivial, and $\iota_{F}$ is not surjective.
This is a peculiar phenomenon for curves without rational points
because, if a curve has a rational point,
then the relative Brauer group vanishes.
After this work was completed,
Brendan Creutz kindly informed that
he constructed plane quartics over $\Q$
for which
the degree $0$ part of the Picard group is strictly smaller than
the Mordell--Weil group of the Jacobian variety; see \cite[Theorem 6.3, Remark 6.4]{Creutz} for details.
\end{rem}

We shall give an application to quadratic points.
Once $\Pic^2(C_4)$ is calculated,
we can determine all of the rational points on $C_4$
defined over quadratic extensions of $\Q$
by Faddeev's methods \cite{Faddeev}.
It turns out that all of them are defined over $\Q(\sqrt{-3})$
and they are the points of tangency of bitangents defined over $\Q$.
(The following result also follows from
the results in \cite[Section 7]{IshitsukaItoOhshita:FermatQuartic}.)

\begin{thm}
\label{MainTheorem:QuadraticPoints}
There are exactly $8$ rational points on $C_4$
which are defined over quadratic extensions of $\Q$.
These are
\[ [1 : \pm \zeta_3 : \pm \zeta^2_3] \quad \text{and} \quad [1 : \pm \zeta^2_3 : \pm \zeta_3]. \]
\end{thm}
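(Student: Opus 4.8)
The plan is to use Faddeev's correspondence between quadratic points on $C_4$ and $\Q$-rational points on the symmetric square, realized inside $\Pic^2(C_4)$. Recall that a point $P \in C_4(K)$ with $[K:\Q] = 2$, together with its Galois conjugate $P^\sigma$, gives an effective divisor $P + P^\sigma$ of degree $2$ defined over $\Q$, hence a $\Q$-point of $\Pic^2(C_4)$ lying in the image of the Abel--Jacobi map $\mathrm{Sym}^2 C_4 \to \Pic^2(C_4)$. Conversely, since $C_4$ is a non-hyperelliptic genus $3$ curve, this map is injective away from the locus of divisors moving in a pencil, and for a plane quartic there is no $g^1_2$; so a $\Q$-point of $\Pic^2(C_4)$ that is represented by an effective divisor is represented by a \emph{unique} effective divisor, and that divisor is either a pair of $\Q$-rational points, a single quadratic point with its conjugate, or (degenerately) twice a $\Q$-rational point. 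Since $C_4(\Q) = \emptyset$ (as noted in the introduction, $C_4$ has no $\R$-point), the only possibility is the quadratic-point case. So the problem reduces to: enumerate the $\Q$-points of $\Pic^2(C_4)$, and for each decide whether it is effective.

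First I would pin down $\Pic^2(C_4)(\Q)$. By Theorem \ref{MainTheorem:NontrivialTorsor} the torsor $\Pic^2_{C_4/\Q}$ is trivial (it has the $\Q$-point $[D_0]$), so translation by $[D_0]$ identifies $\Pic^2(C_4)(\Q)$ with $\Jac(C_4)(\Q)$, which by Theorem \ref{MainTheorem} is $(\Z/2\Z)^3$ generated by $[D_1 - D_0]$, $[D_2 - D_0]$, $[E]$. Thus there are exactly $8$ classes in $\Pic^2(C_4)(\Q)$, namely $[D_0 + \delta]$ as $\delta$ ranges over the $8$ elements of $\Jac(C_4)(\Q)$. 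Using the relation $[D_1 - D_0] + [D_2 - D_0] = [D_3 - D_0]$ of Lemma \ref{Lemma:DivisorClassBitangent}(3), four of these classes are visibly effective: they are $[D_0], [D_1], [D_2], [D_3]$, each represented by the displayed conjugate pair of points $[1 : \pm\zeta_3 : \pm\zeta_3^2]$, which are defined over $\Q(\zeta_3) = \Q(\sqrt{-3})$. That already produces the $8$ claimed quadratic points (two points in each of the four pairs $D_0, D_1, D_2, D_3$). It remains to show the other four classes — those involving $[E]$ — are \emph{not} effective, and that the four effective classes carry no further effective representatives.

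The uniqueness of the effective representative (from non-hyperellipticity, as above) handles the second point: each of $[D_0], [D_1], [D_2], [D_3]$ is represented by exactly one effective divisor, so no quadratic points beyond the eight listed arise from them. For the first point — the heart of the argument — I would show that none of $[D_0 + E]$, $[D_1 + E]$, $[D_2 + E]$, $[D_3 + E]$ is the class of an effective degree-$2$ divisor. By Riemann--Roch on a genus $3$ curve, a degree-$2$ class $L$ is effective iff $h^0(L) \geq 1$; and $h^0(L) \leq 1$ always here (a $h^0 \geq 2$ would give a $g^1_2$, impossible for a plane quartic), so effectivity is the single condition $h^0(L) = 1$, equivalently $h^0(K_{C_4} - L) = 2$ where $\deg(K_{C_4} - L) = 2$ — again impossible unless... the cleanest route is a direct obstruction: if $[D_i + E]$ were effective it would equal $P + P^\sigma$ for a quadratic point $P$, so $[E] = [P + P^\sigma - D_i]$ would be the class of a difference supported on a quadratic point; I would rule this out by the explicit description of $E = 2[1:0:\zeta_8^3] - 2[1:0:\zeta_8^7]$ over $\Q(\zeta_8)$ together with the fact (to be shown earlier, and already asserted in the introduction) that $E$ is \emph{not} linearly equivalent to any divisor defined over $\Q$ — whereas $P + P^\sigma - D_i$ plainly is. I expect this last step to be the main obstacle: it requires controlling linear equivalence of explicit divisors on $C_4$, presumably by exhibiting rational functions (ratios of linear and quadratic forms in $X, Y, Z$ restricted to $C_4$), and cross-referencing the structure of $\Pic^0(C_4)$ from Theorem \ref{MainTheorem}(1) to see that the class $[E] \notin \Pic^0(C_4) = \iota_{C_4}(\Pic^0(C_4))$, i.e. $[E]$ is precisely the class Faddeev's method cannot represent by a rational or quadratic point. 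Having excluded the four $E$-classes, the count is exactly $8$, matching the stated list.
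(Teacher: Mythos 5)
Your argument is correct and is at heart the paper's own (Faddeev's method): a quadratic point $P$ gives the effective $\Q$-rational degree-$2$ divisor $P + \overline{P}$, its class is pinned down by the explicit computation of the degree-$2$ Picard classes, and the divisor itself is then forced to equal one of the $D_i$ because a non-hyperelliptic curve admits no $g^1_2$, so an effective degree-$2$ class has a unique effective representative. The one organizational difference is where you locate the class $[P+\overline{P}]$. The paper places it directly in $\Pic^2(C_4)$, the group of classes of divisors \emph{defined over} $\Q$, which Proposition \ref{Proposition:PicardGroup2} shows equals $\{[D_0],[D_1],[D_2],[D_3]\}$; the four classes involving $[E]$ therefore never enter. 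You instead enumerate all eight elements of $\Pic^2_{C_4/\Q}(\Q) \cong \Jac(C_4)(\Q)$ and must then discard the four translates by $[E]$, which you do correctly: an effective Galois-invariant representative of $[D_i] + [E]$ would have to be of the form $P + \overline{P}$ (since $C_4(\Q)=\emptyset$), making $E$ linearly equivalent to the $\Q$-rational divisor $P + \overline{P} - D_i$ and contradicting Proposition \ref{Proposition:BrauerObstruction}~(2). This is the same input ($\Br([E]) \neq 0$) that the paper uses to prove $\Pic^0(C_4) \subsetneq \Jac(C_4)(\Q)$ in Theorem \ref{MainTheorem}, so the two proofs consume identical facts in a different order; your route costs one extra step but has the small virtue of making explicit why the Mordell--Weil classes outside $\Pic^0(C_4)$ contribute no quadratic points. (The inconclusive Riemann--Roch digression in your last paragraph can simply be deleted; the direct obstruction you settle on is the right one.)
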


Finally, we shall give an application to
the linear determinantal representations of the quartic $C_4$.
Recall that a \textit{linear determinantal representation} of a quartic $C \subset \PP^2$
over a field $K$ is a triple of 4 by 4 matrices
$M_0, M_1, M_2 \in \mathrm{Mat}_4(K)$
such that
\[ \det(XM_0 + YM_1 + ZM_2) = 0 \]
is a defining equation of the curve $C$.
The problem to find linear determinantal representations
of a given plane curve is motivated by
Arithmetic Invariant Theory of Bhargava--Gross \cite{BhargavaGross}.
In \cite{Ishitsuka:Proportion},
the first author studied the arithmetic properties of
linear determinantal representations of smooth cubics.
In \cite{IshitsukaItoOhshita:LDR},
the authors of the current paper determined all of the linear determinantal representations
of the Fermat quartic and the Klein quartic.

As an application of our explicit calculation of $\Pic^2(C_4)$,
we shall show that $C_4$ violates
the local-global property of linear determinantal representations.

\begin{thm}
\label{MainTheorem:LDR}
\begin{enumerate}
\item The quartic $C_4$ does not admit a linear determinantal representation over $\Q$.
\item For $K = \R$ or a $p$-adic field $\Q_p$,
the quartic $C_4$ admits a linear determinantal representation over $K$.
\end{enumerate}
\end{thm}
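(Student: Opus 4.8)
The plan is to connect linear determinantal representations of a smooth plane quartic $C$ over a field $K$ to the arithmetic of $\Pic^2_{C/K}$, via the classical dictionary (going back to Dixon, and made arithmetic in \cite{IshitsukaItoOhshita:LDR}): equivalence classes of linear determinantal representations of $C$ over $K$ correspond bijectively to $K$-rational points of $\Pic^2_{C/K}$ that are \emph{not} effective, i.e.\ line bundles $\mathcal{L}$ of degree $2$ with $h^0(\mathcal{L}) = 0$ (equivalently, after tensoring with $\mathcal{O}(1)$, to certain theta characteristics / modules supported on $C$). Over an algebraically closed field every quartic admits such a representation; the obstruction over a non-closed field $K$ is whether $\Pic^2_{C/K}(K)$ is nonempty at all, and if so whether some $K$-point of it is non-effective.

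For part (1): by Theorem \ref{MainTheorem:NontrivialTorsor} with $d = 1$, the torsor $\Pic^3_{C_4/\Q}$ has no $\Q$-rational point; twisting by the degree-$(-1)$ operation (subtracting a fixed $\Q$-rational divisor class of even degree is not available, but subtracting from a hyperplane class of degree $4$ lands us in $\Pic^3$), one sees $\Pic^2_{C_4/\Q}(\Q)$ and $\Pic^3_{C_4/\Q}(\Q)$ differ by tensoring with $\mathcal{O}_{C_4}(1)$, so in fact I should argue directly: a linear determinantal representation over $\Q$ produces a $\Q$-rational line bundle $\mathcal{L}$ with $\deg \mathcal{L} = 2$ and $h^0 = 0$; then $\mathcal{O}(1) \otimes \mathcal{L}^{\vee}$ has degree $2$ as well, but the relevant non-effectivity forces the associated class in $\Pic^3_{C_4/\Q}$ to be the obstruction class — more cleanly, the standard correspondence attaches to a determinantal representation a $\Q$-point of $\Pic^{2}_{C_4/\Q}$ represented by an \emph{aCM sheaf}, and twisting the matrices by $\mathcal{O}(1)$ shifts the degree by the odd number coming from the line class, landing in the nontrivial torsor $\Pic^{2d+1}_{C_4/\Q}$. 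Since that torsor has no rational point by Theorem \ref{MainTheorem:NontrivialTorsor}, no such representation exists over $\Q$.

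For part (2): over $K = \R$ and over each $\Q_p$ I must exhibit a non-effective $K$-rational degree-$2$ line bundle. The key input is that $C_4$ \emph{does} have the degree-$2$ divisors $D_i$ over $\Q$, hence $\Pic^2_{C_4/K}$ is a trivial torsor over every $K$, so $\Pic^2_{C_4/K}(K) = \Jac(C_4)(K) + [D_0]$ is large; I then need one of these classes to be non-effective, i.e.\ $h^0 = 0$. Over a local field $\Jac(C_4)(K)$ is typically much bigger than $\Jac(C_4)(\Q)$ (it contains a full-dimensional $p$-adic or real manifold of points), so there is plenty of room: a degree-$2$ line bundle on a genus-$3$ curve is effective only if it lies in the image of $\mathrm{Sym}^2 C \to \Pic^2$, whose image is a surface inside the three-dimensional $\Pic^2$; hence a \emph{generic} $K$-point is non-effective, and one checks that over $\R$ and over $\Q_p$ such points exist (e.g.\ perturb $[D_0]$ by a suitably chosen local point of $\Jac$, or invoke the surjectivity of the real/$p$-adic points onto an open neighborhood). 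Concretely I would pin down an explicit $K$-point of $\Jac(C_4)(K)$ that moves $[D_0]$ off the theta divisor and verify non-effectivity by a direct $h^0$ computation on the representative divisor.

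The main obstacle is part (2): producing, \emph{uniformly in $p$}, an explicit non-effective $K$-rational degree-$2$ class and proving its non-effectivity, since $h^0$ can jump and one must rule out membership in the theta divisor $W_2 \subset \Pic^2$ over every completion. I expect to handle this by the dimension count above together with an explicit choice — most cleanly, by exhibiting a degree-$2$ line bundle of the form $\mathcal{O}(1) \otimes \mathcal{L}^{\vee}$ for a carefully chosen effective degree-$2$ divisor $\mathcal{L}$, reducing non-effectivity of one to a statement about the geometry of bitangents and conics through four points, which is visibly insensitive to the choice of local field. Part (1), by contrast, should be essentially immediate once the determinantal/$\Pic$ dictionary is set up, reducing to Theorem \ref{MainTheorem:NontrivialTorsor}.
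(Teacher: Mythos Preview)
Your argument for part (1) has a genuine gap. The dictionary you invoke is correct: linear determinantal representations of a smooth quartic $C$ over $K$ correspond to $K$-rational \emph{divisor classes} of degree $g(C)-1=2$ that are not effective. But your reduction to Theorem~\ref{MainTheorem:NontrivialTorsor} does not go through: the line bundle $\mathcal{O}_{C_4}(1)$ has degree $4$ on the quartic, so tensoring by it (or by its dual) sends $\Pic^2$ to $\Pic^6$ or $\Pic^{-2}$, never to an odd-degree component. There is no natural shift from $\Pic^2$ into the nontrivial torsor $\Pic^{2d+1}_{C_4/\Q}$, and indeed $\Pic^2_{C_4/\Q}$ is a \emph{trivial} torsor (it contains $[D_0]$). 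The obstruction to a determinantal representation over $\Q$ is therefore not a torsor obstruction at all: the relevant set $\Pic^2(C_4)$ is nonempty, the problem is that every one of its elements is effective. The paper's proof uses Proposition~\ref{Proposition:PicardGroup2} (a consequence of Theorem~\ref{MainTheorem}(1)): $\Pic^2(C_4)$ has exactly four elements, namely the classes $[D_0],[D_1],[D_2],[D_3]$, all visibly effective. That is the whole argument for (1), and it rests on the explicit computation $\Pic^0(C_4)\cong(\Z/2\Z)^2$, not on Theorem~\ref{MainTheorem:NontrivialTorsor}.

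For part (2) your intuition is sound but the execution is left vague; the paper packages the dimension count you sketch into a clean statement (Proposition~\ref{Proposition:LargeFieldLDR}): over any \emph{large field} $K$ (in the sense of Pop), a smooth plane curve with a $K$-rational divisor of degree $g-1$ admits infinitely many determinantal representations. The proof runs exactly along your lines---the effective locus $W\subset\Pic^{g-1}$ has dimension $g-1<g$, and the large-field property guarantees $K$-points of $\Jac(C)$ off any proper closed subvariety---with one extra wrinkle you should be aware of: one must also kill the Brauer obstruction (a $K$-point of $\Pic^{g-1}_{C/K}$ need not come from an honest divisor), which the paper handles by working with $n\cdot P+[D_0]$ for $n$ annihilating $\Br(C/K)$. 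Since $\R$ and $\Q_p$ are large, and $D_0$ is a $\Q$-rational degree-$2$ divisor, this settles (2) uniformly without any case-by-case verification.
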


Here is a brief sketch of the proof of our results.
In \cite{IshitsukaItoOhshita:FermatQuartic},
based on the results of Faddeev and Rohrlich,
we explicitly calculated the Mordell--Weil group of the \textit{Fermat quartic}
$F_4 \subset \PP^2$ defined by
$X^4 + Y^4 = Z^4$.
Since the quartic $C_4$ is isomorphic to the Fermat quartic $F_4$
over $\Q(\zeta_8)$, we have
\[ \Jac(C_4)(\Q(\zeta_8)) \cong \Jac(F_4)(\Q(\zeta_8))
   \cong (\Z/4\Z)^{\oplus 5} \oplus \Z/2\Z. \]
Calculating the twist of Galois action explicitly,
we calculate $\Jac(C_4)(\Q)$ and $\Pic^0(C_4)$.
Since $\Pic^0(C_4) \cong \Pic^2(C_4)$,
we determine every $\Q$-rational divisor of degree $2$ on $C_4$.
Theorem \ref{MainTheorem:QuadraticPoints}
is obtained by Faddeev's methods.
Theorem \ref{MainTheorem:NontrivialTorsor} and Theorem \ref{MainTheorem:LDR}
are consequences of our explicit calculation.

In the appendix,
we give a sample source code for Singular (version 4.2.1)
which confirms the formulae on divisors used in this paper.

\section{Bitangents and their points of tangency}
\label{Section:Bitangents}

Recall that a line $L \subset \PP^2$ is called
a \textit{bitangent} of $C_4$
if the intersection multiplicity at every $P \in C_4 \cap L$ is divisible by $2$.
The divisor on $C_4$ cut out by $L$ is divisible by $2$.
Since $C_4$ is a smooth quartic, it has $28$ bitangents over $\overline{\Q}$.
The defining equations of them are well-known,
and can be found in \cite[p.\ 14]{Edge}.

Among the 28 bitangents, the following four bitangents are defined over $\Q$:
\begin{align*}
L_0 : X + Y + Z &= 0, &
L_1 : X - Y + Z &= 0, \\
L_2 : X + Y - Z &= 0, &
L_3 : X - Y - Z &= 0.
\end{align*}
They give divisors of degree $2$ over $\Q$:
\begin{align*}
D_0 &\coloneqq [1 :  \zeta_3 :  \zeta^2_3] + [1 :  \zeta^2_3 :  \zeta_3] &
D_1 &\coloneqq [1 : -\zeta_3 :  \zeta^2_3] + [1 : -\zeta^2_3 :  \zeta_3] \\
D_2 &\coloneqq [1 :  \zeta_3 : -\zeta^2_3] + [1 :  \zeta^2_3 : -\zeta_3] &
D_3 &\coloneqq [1 : -\zeta_3 : -\zeta^2_3] + [1 : -\zeta^2_3 : -\zeta_3]
\end{align*}

\begin{lem}
\label{Lemma:DivisorClassBitangent}
\begin{enumerate}
\item For any $i,j$ with $0 \leq i < j \leq 3$,
the divisor $D_i$ is not linearly equivalent to $D_j$.
In particular, $\Pic^2(C_4)$ has at least $4$ elements.

\item For any $i,j$ with $0 \leq i < j \leq 3$,
the divisor class $[D_i - D_j]$ is killed by $2$
in $\Pic^0(C_4)$.

\item 
We have an injective homomorphism
\[
  (\Z/2\Z)^{\oplus 2} \hookrightarrow \Pic^0(C_4),
  \qquad
  (c_1, c_2) \mapsto c_1 [D_1 - D_0] + c_2 [D_2 - D_0].
\]
Moreover, we have
$[D_1 - D_0] + [D_2 - D_0] = [D_3 - D_0]$
in $\Pic^0(C_4)$.
\end{enumerate}
\end{lem}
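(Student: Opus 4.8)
The plan is to handle the three parts in a way that lets later parts reuse earlier ones. For part (3), the injectivity claim is essentially the conjunction of (1) and (2): once we know each $[D_i-D_0]$ is $2$-torsion (part (2)) and that no two of the $D_i$ are linearly equivalent (part (1)), the only thing left to rule out is the coincidence $[D_1-D_0]=[D_2-D_0]$, i.e. $D_1\sim D_2$, which is again a special case of (1). So the real content is in (1) and in the final relation $[D_1-D_0]+[D_2-D_0]=[D_3-D_0]$.

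For part (2): each $D_i$ is, by construction, exactly half of the degree-$4$ divisor cut out on $C_4$ by the bitangent line $L_i$ (this is the point of the discussion preceding the lemma, and can be checked directly by intersecting $L_i\colon X\pm Y\pm Z=0$ with $X^4+Y^4+Z^4=0$ and verifying that the resulting quartic in one variable is a perfect square). Hence $2D_i$ is a hyperplane section, so $2D_i\sim 2D_j$ for all $i,j$, which gives $2[D_i-D_j]=0$ in $\Pic^0(C_4)$. For the final relation in part (3): I would exhibit an explicit rational function on $C_4$ whose divisor is $D_1+D_2-D_3-D_0$. The natural candidate is a ratio of two conics (or a conic over the product $L_0L_3$, etc.) passing through the appropriate eight tangency points; concretely one looks for a quadratic form $Q$ with $\divi(Q) - \divi(L_0 L_3) = 2(D_1+D_2-D_0-D_3)$ on $C_4$, or more simply a function cutting out the degree-$2$ relation directly. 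Since all eight points involved have coordinates in $\Q(\zeta_3)$ and are permuted in a transparent way by sign changes $Y\mapsto -Y$, $Z\mapsto -Z$, writing down such a $Q$ and checking the divisor identity is a finite computation, and it is exactly the sort of formula the appendix's Singular code is meant to confirm.

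The main obstacle is part (1): showing $D_i\not\sim D_j$ for $i\neq j$. A clean approach is to argue by the geometry of the canonical embedding. The quartic $C_4$ is its own canonical curve, so for a degree-$2$ divisor $D$ on $C_4$ one has $h^0(D)=1$ unless $D$ is a hyperelliptic-type fiber; since a smooth plane quartic is non-hyperelliptic, $h^0(D_i)=1$ for every effective degree-$2$ divisor, and $D_i\sim D_j$ would force $D_i=D_j$ as divisors. Thus it suffices to observe that the eight points $[1:\pm\zeta_3:\pm\zeta_3^2]$, $[1:\pm\zeta_3^2:\pm\zeta_3]$ are pairwise distinct and partitioned into the four pairs $D_0,D_1,D_2,D_3$ with no overlap — immediate from the coordinates. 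This reduces part (1) to the standard fact that effective degree-$2$ divisors on a non-hyperelliptic curve are rigid, avoiding any Mordell–Weil input; I would state that fact, cite it, and then the pairwise distinctness of the listed points finishes (1), hence also (3), and the displayed relation completes the lemma.
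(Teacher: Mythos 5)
Your parts (1) and (2), and your reduction of the injectivity in (3) to them, are correct and essentially the paper's own argument: (2) is exactly the hyperplane-section observation ($2D_i$ and $2D_j$ are both cut out by lines), and (1) is the rigidity of effective degree-$2$ divisors on a non-hyperelliptic curve (the paper phrases it as: a function $f$ with $\divi(f)=D_i-D_j$ would induce a low-degree map $C_4\to\PP^1$, impossible for a smooth plane quartic of genus $3$), combined with the visible pairwise distinctness of the eight tangency points.

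The one place where something concrete must be produced is the relation $[D_1-D_0]+[D_2-D_0]=[D_3-D_0]$, and there your displayed candidate does not do the job. A quadratic form $Q$ with $\divi(Q)-\divi(L_0L_3)=2(D_1+D_2-D_0-D_3)$ would only show $2[D_1]+2[D_2]=2[D_0]+2[D_3]$, which is already automatic from part (2) (all the $2[D_i]$ equal the hyperplane class); it carries no information about the classes themselves, only about their doubles, so it cannot yield the relation. What you need --- and what your parenthetical ``a conic over the product $L_0L_3$'' correctly points to once the spurious factor of $2$ is removed --- is a conic meeting $C_4$ \emph{simply} at all eight tangency points. The paper uses $Q=X^2+Y^2+Z^2$: each of the eight points satisfies $1+\zeta_3+\zeta_3^2=0$, so $\divi(Q)=D_0+D_1+D_2+D_3$ (degree $8$, all multiplicities $1$), whence
\[
\divi\!\left(\frac{X^2+Y^2+Z^2}{(X+Y+Z)^2}\right)=D_1+D_2+D_3-3D_0,
\]
which together with the $2$-torsion from (2) gives $[D_1-D_0]+[D_2-D_0]=[D_3-D_0]$; equivalently $\divi\bigl(Q/(L_0L_3)\bigr)=D_1+D_2-D_0-D_3$ directly. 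So your strategy is the right one, but as written the key identity is either vacuous or left as an unexecuted search; supplying $X^2+Y^2+Z^2$ closes the gap.
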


\begin{proof}
(1)
Assume that $D_i$ is linearly equivalent to $D_j$.
Then there is a non-zero rational function $f$ on $C_4$
with $\divi(f) = D_i - D_j$.
This implies the morphism $C_4 \to \PP^1$ induced by $f$
has degree $1$,
which is absurd because $C_4$ has genus $3$.

(2)
By definition of bitangents,
the divisors on $C_4$ cut out by $L_i, L_j$ are $2 D_i, 2 D_j$.
Thus $2 D_i$ and $2 D_j$ are linearly equivalent to
each other because both are hyperplane sections.
Therefore, we have $[2 D_i - 2 D_j] = 0$ in $\Pic^0(C_4)$.

(3)
Since
\begin{align*}
    \mathrm{div}(X^2+Y^2+Z^2) &= D_0 + D_1 + D_2 + D_3, \\
    \mathrm{div}(X+Y+Z) &= 2D_0,
\end{align*}
we have
\[ D_1 + D_2 + D_3 - 3 D_0 =
   \divi \left(\frac{X^2+Y^2+Z^2}{(X+Y+Z)^2} \right).
\]
Hence we have $[D_1 - D_0] + [D_2 - D_0] = [D_3 - D_0]$ in $\Pic^0(C_4)$.
The injectivity follows from (1).
\end{proof}

\section{Divisors on the Fermat quartic}
\label{Section:DivisorsFermatQuartic}

Let $F_4 \subset \PP^2$ be the Fermat quartic over $\Q$
defined by $X^4 + Y^4 = Z^4$.
Rohrlich calculated the subgroup of $\Jac(F_4)(\Q(\zeta_8))$
generated by the differences of the cusps;
see \cite[p.\ 117, Corollary 1]{Rohrlich}.
(This subgroup is denoted by $\mathscr{D}^{\infty}/\mathscr{F}^{\infty}$ in \cite{Rohrlich}.)
In \cite{IshitsukaItoOhshita:FermatQuartic},
with the aid of computers,
we proved there are no other elements in $\Jac(F_4)(\Q(\zeta_8))$.
Thus, we have an isomorphism
\[ \Jac(F_4)(\Q(\zeta_8)) \cong (\Z/4\Z)^{\oplus 5} \oplus (\Z/2\Z). \]
Explicit generators are given in
\cite[Theorem 6.5]{IshitsukaItoOhshita:FermatQuartic}.

The quartic $C_4$ is isomorphic to
the Fermat quartic $F_4$ over $\Q(\zeta_8)$.
We fix an isomorphism between $C_4$ and $F_4$ over $\Q(\zeta_8)$ as follows:
\[
  \rho \colon C_4 \overset{\cong}{\to} F_4, \qquad
  [X: Y: Z] \mapsto [X: Y: \zeta_8 Z].
\]

Via the above isomorphism $\rho$,
we shall translate the results in \cite{IshitsukaItoOhshita:FermatQuartic}
into the results on divisor classes on $C_4$.
We define $A_i, B_i, C_i \in C_4$ ($0 \leq i \leq 3$) by
\begin{align*}
A_i &\coloneqq [0 : \zeta_4^i : \zeta_8^7], &
B_i &\coloneqq [\zeta_4^i : 0 : \zeta_8^7], &
C_i &\coloneqq [\zeta_8 \zeta_4^i : 1 : 0].
\end{align*}
There points correspond to the cusps on $F_4$ via
the isomorphism $\rho$.
Then we define
\begin{align*}
\alpha_{i} &\coloneqq [A_{i} - B_{0}], &
\beta_{i}  &\coloneqq [B_{i} - B_{0}], &
\gamma_{i} &\coloneqq [C_{i} - B_{0}].
\end{align*}
Here the linear equivalence class of a divisor $D$ is denoted by $[D]$.
Moreover, we define
\begin{align*}
e_{1} &\coloneqq \alpha_{1}, &
e_{2} &\coloneqq \alpha_{2}, &
e_{3} &\coloneqq \beta_{1}, \\
e_{4} &\coloneqq \beta_{2}, &
e_{5} &\coloneqq \gamma_{1}, 
\end{align*}
and
\[
e_{6} \coloneqq
\alpha_{1} + \alpha_{2} + \beta_{1} + \beta_{2} + \gamma_{1} + \gamma_{2}.
\]
Note that we use the same notation as in
\cite[Section 2]{IshitsukaItoOhshita:FermatQuartic}.
The points $A_i, B_i, C_i$ ($0 \leq i \leq 3$)
and the divisors
$\alpha_i, \beta_i, \gamma_i$  ($0 \leq i \leq 3$),
$e_i$ ($1 \leq i \leq 6$)
correspond to the points and the divisors denoted
by the same letters in
\cite[Section 2]{IshitsukaItoOhshita:FermatQuartic}.

Translating \cite[Theorem 6.5]{IshitsukaItoOhshita:FermatQuartic} via $\rho$,
we have an isomorphism
\[
(\Z/4\Z)^{\oplus 5} \oplus (\Z/2\Z) \overset{\cong}{\to}
\Jac(C_4)(\Q(\zeta_8)),
\qquad
(c_1,c_2,c_3,c_4,c_5,c_6) \mapsto \sum_{i=1}^{6} c_i e_{i}
\]

For later use, we write
$\alpha_{i}, \beta_{i}$ and $\gamma_{i}$ ($0 \leq i \leq 3$)
as linear combinations of $e_{i}$  ($1 \leq i \leq 6$).
\begin{align*}
\alpha_{0} &= 2e_{1} + e_{2} + 2e_{3} + e_{4}, &
\alpha_{1} &= e_{1}, \\
\alpha_{2} &= e_{2}, &
\alpha_{3} &= e_{1} + 2e_{2} + 2e_{3} + 3 e_{4}, \\
\beta_{0} &= 0, &
\beta_{1} &= e_{3}, \\
\beta_{2} &= e_{4}, &
\beta_{3} &= 3 e_{3} + 3 e_{4}, \\
\gamma_{0} &= 3 e_{1} + 3 e_{2} + e_{3} + e_{5} + e_{6}, &
\gamma_{1} &= e_{5}, \\
\gamma_{2} &= 3 e_{1} + 3 e_{2} + 3 e_{3} + 3 e_{4} + 3 e_{5} + e_{6}, &
\gamma_{3} &= 2 e_{1} + 2 e_{2} + e_{4} + 3 e_{5}.
\end{align*}

\begin{rem}
\label{Remark:MordellWeilGroupBasis}
It can be checked by Singular that
we have the following relations between divisors.
\begin{align*}
    D_1 - D_0 &= 2 B_1 + 2 B_2 - 4 B_0 \\
    & + \divi \bigg( \frac{(X - \zeta_8Z)(X-Y+Z)}{(X^2+Y^2+Z^2) + (\zeta_{24}^5 - \zeta_{24}^3 - \zeta_{24})(Y^2 - XZ)} \bigg), \\
    D_2 - D_0 &= 2 A_1 + 2 A_2 + 2 B_1 + 2 B_2 - 8 B_0 \\
    & + \divi \bigg( \frac{(X - \zeta_8Z)^2(X+Y-Z)}{(X^2+Y^2+Z^2)(X+Y) - (\zeta_{24}^5 - \zeta_{24}^3 - \zeta_{24})Z(X^2 + XY + Y^2)} \bigg), \\
    D_3 - D_0 &= 2 A_1 + 2 A_2 - 4 B_0 \\
    & + \divi \bigg( \frac{(X - \zeta_8Z)(X-Y-Z)}{(X^2-Y^2-2YZ-Z^2) + (\zeta_{24}^5 - \zeta_{24}^3 - \zeta_{24})(Y^2 + YZ + Z^2)} \bigg), \\
    E &= 2 B_2 - 2 B_0.
\end{align*}
Therefore, we have the following relations in $\Jac(C_4)(\Q(\zeta_8))$:
\begin{align*}
[D_1 - D_0] &=  2 e_3 + 2 e_4, &
[D_2 - D_0] &=  2 e_1 + 2 e_2 + 2 e_3 + 2 e_4, \\
[D_3 - D_0] &=  2 e_1 + 2 e_2, & 
[E] &= 2 e_4.
\end{align*}

\end{rem}

\section{Calculation of the Galois action}

Here we shall calculate the action of Galois group on
$\Jac(C_4)(\Q(\zeta_8))$.
Since
\[
\Jac(C_4)(\Q) = \Jac(C_4)(\Q(\zeta_8))^{\Gal(\Q(\zeta_8)/\Q)},
\]
we can calculate $\Jac(C_4)(\Q)$ explicitly
and prove Theorem \ref{MainTheorem} (1).

The Galois group $\Gal(\Q(\zeta_8)/\Q) \cong (\Z/8\Z)^{\times}$
is generated by $\sigma_3, \sigma_5$ defined by
\[
\sigma_3 \colon \zeta_8 \mapsto \zeta_8^3, \qquad
\sigma_5 \colon \zeta_8 \mapsto \zeta_8^5.
\]

\begin{prop}
\label{Proposition:MordellWeilGroupOverQ}
The Mordell--Weil group $\Jac(C_4)(\Q)$
is a free $\Z/2\Z$-module of rank $3$ generated by
$[D_1 - D_0]$, $[D_2 - D_0]$, $[E]$.
\end{prop}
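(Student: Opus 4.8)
The plan is to compute the fixed submodule $\Jac(C_4)(\Q(\zeta_8))^{\Gal(\Q(\zeta_8)/\Q)}$ directly, using the isomorphism
\[
(\Z/4\Z)^{\oplus 5} \oplus (\Z/2\Z) \overset{\cong}{\to} \Jac(C_4)(\Q(\zeta_8)), \qquad (c_1,\dots,c_6) \mapsto \sum_{i=1}^6 c_i e_i,
\]
to turn the problem into a concrete linear-algebra computation over $\Z/4\Z$ (respectively $\Z/2\Z$ in the last coordinate). First I would determine how $\sigma_3$ and $\sigma_5$ act on the points $A_i, B_i, C_i$. These are given by explicit projective coordinates in terms of $\zeta_8$ and $\zeta_4 = \zeta_8^2$, so applying $\sigma_3$ and $\sigma_5$ permutes them in a way one reads off mechanically; for instance $\sigma_5$ sends $\zeta_8^7 \mapsto \zeta_8^{35} = \zeta_8^3$ and $\zeta_4 \mapsto \zeta_4$, and similarly for $\sigma_3$. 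This yields the action of $\sigma_3, \sigma_5$ on the divisor classes $\alpha_i, \beta_i, \gamma_i$, and then — using the expressions for $\alpha_i, \beta_i, \gamma_i$ as $\Z$-linear combinations of $e_1,\dots,e_6$ recorded in Section \ref{Section:DivisorsFermatQuartic} — the action on each $e_i$, hence two explicit $6 \times 6$ matrices (with $\Z/4\Z$ entries, plus the parity constraint in the sixth slot) representing $\sigma_3$ and $\sigma_5$.

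Next I would solve the simultaneous fixed-point equations $\sigma_3(v) = v$ and $\sigma_5(v) = v$ for $v = (c_1,\dots,c_6)$ in the module $(\Z/4\Z)^{\oplus 5}\oplus(\Z/2\Z)$. Concretely this means computing $\Ker(\sigma_3 - \id) \cap \Ker(\sigma_5 - \id)$, which is a routine but careful Smith-normal-form-style calculation over $\Z/4\Z$; care is needed because $\Z/4\Z$ is not a field, so one must track $2$-torsion versus free parts rather than just counting dimensions. The upshot should be a submodule of order $8$, and I would exhibit an explicit generating set of three elements each of order $2$, establishing that $\Jac(C_4)(\Q)$ is a free $\Z/2\Z$-module of rank $3$.

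Finally I would identify the three generators with $[D_1 - D_0]$, $[D_2 - D_0]$, and $[E]$. For this I invoke Remark \ref{Remark:MordellWeilGroupBasis}, which records
\[
[D_1 - D_0] = 2e_3 + 2e_4, \quad [D_2 - D_0] = 2e_1 + 2e_2 + 2e_3 + 2e_4, \quad [E] = 2e_4
\]
in $\Jac(C_4)(\Q(\zeta_8))$. It remains to check that these three classes are Galois-invariant (immediate: $D_0, D_1, D_2$ are $\Q$-rational divisors, and $[E]$ is Galois-invariant as already noted in the introduction) and that they span the fixed submodule found above — i.e. that the three vectors $2e_3+2e_4$, $2e_1+2e_2+2e_3+2e_4$, $2e_4$ are $\Z/2\Z$-linearly independent and their span has order $8$, matching the order computed in the previous step. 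Combining these gives the claimed equality
\[
\Jac(C_4)(\Q) = (\Z/2\Z)[D_1 - D_0] \oplus (\Z/2\Z)[D_2 - D_0] \oplus (\Z/2\Z)[E].
\]
The main obstacle is the middle step: correctly computing the two Galois matrices on the $e_i$-basis and then solving the fixed-point system over the non-reduced ring $\Z/4\Z$ without miscounting the torsion. Once the matrices are in hand, the rest is bookkeeping, and the divisor identifications are supplied by the Singular computations cited in Remark \ref{Remark:MordellWeilGroupBasis}.
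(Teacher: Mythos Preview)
Your proposal is correct and follows essentially the same route as the paper: compute the permutation action of $\sigma_3,\sigma_5$ on the cusps $A_i,B_i,C_i$, translate via the formulae of Section~\ref{Section:DivisorsFermatQuartic} into explicit $6\times 6$ matrices on the $e_i$-basis, intersect the kernels of $s_3-1$ and $s_5-1$ over $(\Z/4\Z)^{\oplus5}\oplus(\Z/2\Z)$ to find a rank-$3$ free $\Z/2\Z$-module, and then identify the generators with $[D_1-D_0]$, $[D_2-D_0]$, $[E]$ via Remark~\ref{Remark:MordellWeilGroupBasis}. The paper records the kernel generators as $2e_1+2e_2,\ 2e_3,\ 2e_4$, which is the same subgroup as the one spanned by your three vectors.
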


\begin{proof}
The actions of $\sigma_3$, $\sigma_5$ on
$A_i, B_i, C_i$ ($0 \leq i \leq 3$)
are summarized in the following table:
\[
\begin{array}{|c||c|c|c|c||c|c|c|c||c|c|c|c|}
\hline
& A_0 & A_1 & A_2 & A_3 & B_0 & B_1 & B_2 & B_3 & C_0 & C_1 & C_2 & C_3 \\
\hline
\sigma_3 & A_1 & A_0 & A_3 & A_2 & B_1 & B_0 & B_3 & B_2 & C_1 & C_0 & C_3 & C_2 \\
\hline
\sigma_5   & A_2 & A_3 & A_0 & A_1 & B_2 & B_3 & B_0 & B_1 & C_2 & C_3 & C_0 & C_1 \\
\hline
\end{array}
\]

Using the formulae in
Section \ref{Section:DivisorsFermatQuartic}
(see also \cite[Section 3]{IshitsukaItoOhshita:FermatQuartic}),
the actions of $\sigma_3$, $\sigma_5$ on
the basis $e_i$ ($1 \leq i \leq 6$) of $\Jac(C_4)(\Q(\zeta_8))$
are calculated as follows.
\begin{align*}
&\begin{array}{|c||c|c|c|c|}
\hline
& e_1 & e_2 & e_3 & e_4 \\
\hline
\sigma_3 & 2e_1 + e_2 + e_3 + e_4 & e_1+ 2e_2 + e_3 + 3e_4  & 3e_3 & 2e_3+3e_4 \\
\hline
\sigma_5   & e_1 + 2e_2 + 2e_3 + 2e_4 & 2e_1 + e_2 + 2e_3 & 3e_3 + 2e_4 & 3e_4 \\
\hline
\end{array} \\
&\begin{array}{|c||c|c|}
\hline
& e_5 & e_6 \\
\hline
\sigma_3 & 3e_1 + 3e_2 + e_5+e_6 & 2e_3 + e_6 \\
\hline
\sigma_5 & 2e_1 + 2e_2 + 3e_5 & 2e_4 + e_6 \\
\hline
\end{array}
\end{align*}
In other words, we have the following matrix representation of $\sigma_3, \sigma_5$ on $\Jac(C_4)(\Q(\zeta_8))$:
\[
	s_3 = \begin{pmatrix}
	2 & 1 & 0 & 0 & 3 & 0 \\
	1 & 2 & 0 & 0 & 3 & 0 \\
	1 & 1 & 3 & 2 & 0 & 2 \\
	1 & 3 & 0 & 3 & 0 & 0 \\
	0 & 0 & 0 & 0 & 1 & 0 \\
	0 & 0 & 0 & 0 & 1 & 1
	\end{pmatrix}, \qquad
	s_5 =  \begin{pmatrix}
	1 & 2 & 0 & 0 & 2 & 0 \\
	2 & 1 & 0 & 0 & 2 & 0 \\
	2 & 2 & 3 & 0 & 0 & 0 \\
	2 & 0 & 2 & 3 & 0 & 2 \\
	0 & 0 & 0 & 0 & 3 & 0 \\
	0 & 0 & 0 & 0 & 0 & 1 
	\end{pmatrix}.
\]

The $\Q$-rational points of $\Jac(C_4)$ are
the fixed points by $\sigma_3, \sigma_5$.
We have to study the kernel of $s_3 - 1$ and $s_5 - 1$.
It is straightforward to check that
the kernel is a free $\Z/2\Z$-module of rank $3$
generated by $2 e_1 + 2 e_2$, $2 e_3$, $2 e_4$.
Thus, the assertion follows from
Remark \ref{Remark:MordellWeilGroupBasis}.
\end{proof}

Next, we shall calculate the action of Galois group on
$\Pic^1(C_4)(\Q(\zeta_8))$.
Although $\Pic^1(C_4)(\Q(\zeta_8))$ is isomorphic to $\Jac(C_4)(\Q(\zeta_8))$,
the Galois actions are different.

\begin{prop}
\label{Proposition:Pic1}
\begin{enumerate}
\item Each automorphism $\sigma_3, \sigma_5, \sigma_3 \sigma_5 \in \Gal(\Q(\zeta_8)/\Q)$
does not have a fixed point in $\Pic^1_{C_4/\Q}(\Q(\zeta_8))$.
\item For $K = \Q(\sqrt{-1}), \Q(\sqrt{2})$ and $\Q(\sqrt{-2})$, the scheme $\Pic^1_{C_4/\Q}$ has no $K$-rational point.
\end{enumerate}
\end{prop}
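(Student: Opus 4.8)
The plan is to trivialize the torsor $\Pic^1_{C_4/\Q}$ over $\Q(\zeta_8)$ using a base point, rewrite the Galois action as an affine action on $\Jac(C_4)(\Q(\zeta_8))$, reduce part (1) to a short computation over $\F_2$, and then deduce part (2) by Galois descent. Concretely, I would take the base point $[B_0]\in\Pic^1_{C_4/\Q}(\Q(\zeta_8))$ attached to $B_0=[1:0:\zeta_8^7]\in C_4(\Q(\zeta_8))$ (since $C_4$ has a rational point over $\Q(\zeta_8)$, we have $\Pic^1_{C_4/\Q}(\Q(\zeta_8))=\Pic^1(C_4\times_\Q\Q(\zeta_8))$). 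Translation by $[B_0]$ identifies $\Pic^1_{C_4/\Q}(\Q(\zeta_8))$ with $\Jac(C_4)(\Q(\zeta_8))$, and under this identification $\sigma\in\Gal(\Q(\zeta_8)/\Q)$ acts by the affine map $y\mapsto s_\sigma\cdot y+c_\sigma$, where $s_\sigma$ is the matrix in the proof of Proposition \ref{Proposition:MordellWeilGroupOverQ} and $c_\sigma\coloneqq[\sigma(B_0)-B_0]$. Reading off $\sigma(B_0)$ from the table in that proof and using $\beta_i=[B_i-B_0]$ together with the formulas for $\beta_i$ in Section \ref{Section:DivisorsFermatQuartic}, one finds
\[
  c_{\sigma_3}=\beta_1=e_3,\qquad c_{\sigma_5}=\beta_2=e_4,\qquad c_{\sigma_3\sigma_5}=\beta_3=3e_3+3e_4 .
\]

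For part (1): the automorphism $\sigma$ has a fixed point on $\Pic^1_{C_4/\Q}(\Q(\zeta_8))$ if and only if the equation $(s_\sigma-1)y=-c_\sigma$ is solvable in $\Jac(C_4)(\Q(\zeta_8))\cong(\Z/4\Z)^{\oplus5}\oplus\Z/2\Z$, i.e.\ if and only if $c_\sigma\in\Im(s_\sigma-1)$. To exclude this I would reduce modulo $2$: the subgroup $2\Jac(C_4)(\Q(\zeta_8))$ is Galois-stable, the quotient is the $\F_2$-vector space with basis the images of $e_1,\dots,e_6$, and $\sigma$ acts on it through $s_\sigma\bmod 2$; hence it suffices to show that $c_\sigma\bmod 2$ is not in $\Im(s_\sigma-1\bmod 2)$. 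For $\sigma_5$ this is immediate since $s_5\equiv1\pmod 2$, so the image is $0$ while $e_4\not\equiv0$. For $\sigma_3$ — and therefore also for $\sigma_3\sigma_5$, whose matrix reduces to $s_3\bmod 2$ because $s_5\equiv1$ — a direct computation shows that $\Im(s_3-1\bmod 2)$ is the two-dimensional space spanned by the images of $e_1+e_2+e_3+e_4$ and $e_1+e_2+e_6$, which contains neither $e_3$ nor $e_3+e_4$. This proves (1).

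For part (2): the quadratic subfields $\Q(\sqrt{-1})$, $\Q(\sqrt{-2})$, $\Q(\sqrt{2})$ of $\Q(\zeta_8)$ are the fixed fields of $\sigma_5$, $\sigma_3$, $\sigma_3\sigma_5$, respectively (recall $\sqrt{-1}=\zeta_8^2$, $\sqrt{-2}=\zeta_8+\zeta_8^3$, $\sqrt{2}=\zeta_8+\zeta_8^{-1}$). For each such $K$, Galois descent gives $\Pic^1_{C_4/\Q}(K)=\Pic^1_{C_4/\Q}(\Q(\zeta_8))^{\Gal(\Q(\zeta_8)/K)}$, which is the fixed-point set of the corresponding automorphism and hence empty by (1).

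I expect the main difficulty to be bookkeeping rather than mathematics: correctly tracking the translation class $c_\sigma$ through the chain of identifications, and checking that reducing modulo $2$ is both legitimate and sufficient. The reduction is clean here only because $s_5\equiv1\pmod 2$; in general one would instead have to examine the image of $s_\sigma-1$ directly inside $(\Z/4\Z)^{\oplus5}\oplus\Z/2\Z$.
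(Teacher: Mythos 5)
Your proposal is correct and follows essentially the same route as the paper: trivialize $\Pic^1_{C_4/\Q}$ over $\Q(\zeta_8)$ by a cusp, reduce the fixed-point question to whether the translation class lies in $\Im(\sigma-1)$ on $\Jac(C_4)(\Q(\zeta_8))$, certify non-membership modulo $2$, and deduce (2) by Galois descent to the three quadratic subfields. The only (immaterial) differences are your choice of base point $B_0$ instead of $A_0$, which makes the classes $c_\sigma$ the $\beta_i$ rather than the $\alpha_i$, and your explicit computation of $\Im(\bar{s}_3-1)$ where the paper instead exhibits the linear functional $a_2+a_3+a_6 \bmod 2$ vanishing on it; both certificates are consistent and correct.
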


\begin{proof}
(1)
By the matrix representations $s_3, s_5$ of the automorphisms 
$\sigma_3, \sigma_5 \in \Gal(\Q(\zeta_8)/\Q)$ 
in the proof of Proposition \ref{Proposition:MordellWeilGroupOverQ},
we see that 
\begin{equation}\label{Equation:image}
    (\sigma_5-1)\Jac(C_4)(\Q(\zeta_8)) = 2 \Jac(C_4)(\Q(\zeta_8)).
\end{equation}
Moreover, if 
\(
a_1e_1 + a_2e_2 + a_3e_3 + a_4e_4 + a_5e_5 + a_6e_6 \; (a_i \in \Z)
\)
is in \((\sigma_3-1)\Jac(C_4)(\Q(\zeta_8))\), then we have 
\begin{equation}\label{Equation:equivalence}
    a_2 + a_3 + a_6 \equiv 0 \pmod 2.
\end{equation}
Since $s_3s_5 - 1 = s_3(s_5-1) + s_3-1 \equiv s_3 - 1 \pmod 2$, we have the same congruence on \((\sigma_3\sigma_5-1)\Jac(C_4)(\Q(\zeta_8))\).

Since $A_0 = [0 : 1 : \zeta_8^7]$ is a $\Q(\zeta_8)$-rational point, we have the following bijection over $\Q(\zeta_8)$
\[
\Jac(C_4)(\Q(\zeta_8)) \overset{1:1}{\to}
\Pic^1_{C_4/\Q}(\Q(\zeta_8)), \qquad
[D] \mapsto [D + A_0].
\]
If a divisor class $[D + A_0]$ is fixed by $\sigma_5$,
we have
$\sigma_5([D + A_0]) = [D + A_0]$.
In particular, we have
$(\sigma_5 - 1)[A_0] = - (\sigma_5 - 1)[D]$.
Using the formulae in
Section \ref{Section:DivisorsFermatQuartic}
(see also \cite[Section 3]{IshitsukaItoOhshita:FermatQuartic}),
we see that
\[
  (\sigma_5 - 1)[A_0] = [A_2 - A_0] = \alpha_2 - \alpha_0 = 2 e_1 + 2 e_3 + 3 e_4
\]
is not divisible by $2$ in $\Jac(C_4)(\Q(\zeta_8))$.
On the other hand, $-(\sigma_5-1)[D]$ is an element of 
$2 \Jac(C_4)(\Q(\zeta_8))$ by \eqref{Equation:image}.
The contradiction shows
$\sigma_5$ does not have a fixed point in $\Pic^1_{C_4/\Q}(\Q(\zeta_8))$.
Since two divisor classes
\begin{align*}
    (\sigma_3 - 1)[A_0] &= [A_1 - A_0] = \alpha_1 - \alpha_0 = 3e_1 + 3e_2 + 2e_3 + 3e_4, \\
    (\sigma_3\sigma_5 - 1)[A_0] &= [A_3 - A_0] = \alpha_3 - \alpha_0 = 3e_1 + e_2 + 2e_4
\end{align*}
do not satisfy the congruence \eqref{Equation:equivalence},
by similar arguments,
we see that neither $\sigma_3$ nor $\sigma_3\sigma_5$
has a fixed point in $\Pic^1_{C_4/\Q}(\Q(\zeta_8))$.

(2) The assertion for $\sigma_5$ follows from
\[ \Q(\zeta_8)^{\sigma_5} \coloneqq \{ x \in \Q(\zeta_8) \mid \sigma_5(x) = x \} = \Q(\sqrt{-1}). \]
Other parts follow from $\Q(\zeta_8)^{\sigma_3} = \Q(\sqrt{-2})$ and 
$\Q(\zeta_8)^{\sigma_3\sigma_5} = \Q(\sqrt{2})$.
\end{proof}

\begin{rem}
The class
$[\Pic^1_{C_4}/\Q] \in H^1(\Gal(\overline{\Q}/\Q), \Jac(C_4)(\overline{\Q}))$
is a non-trivial element killed by $2$.
The class
$[\Pic^2_{C_4}/\Q] \in H^1(\Gal(\overline{\Q}/\Q), \Jac(C_4)(\overline{\Q}))$
is trivial because $C_4$ has a divisor of degree $2$ over $\Q$.
In fact, we have
$2 [\Pic^1_{C_4/\Q}] = [\Pic^2_{C_4/\Q}] = 0$
in $H^1(\Gal(\overline{\Q}/\Q), \Jac(C_4)(\overline{\Q}))$.
\end{rem}

Finally,
we shall calculate the action of Galois group on the divisor 
\[ E \coloneqq 2 [1 :  0 :  \zeta^3_8] - 2 [1 :  0 :  \zeta^7_8]. \]

\begin{prop}
\label{Proposition:BrauerObstruction}
\begin{enumerate}
\item The divisor class $[E] \in \Jac(C_4)(\Q(\zeta_8))$ is invariant under the action of $\Gal(\Q(\zeta_8)/\Q)$.
\item The divisor $E$ is not linearly equivalent to a divisor defined over $\Q$.
In other words, the image of $[E]$ in the relative Brauer group
$\Br(C_4/\Q)$ is non-trivial.
\end{enumerate}
\end{prop}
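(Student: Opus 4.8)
The plan is to prove the two assertions in turn, both by reducing to computations already in hand. For part (1), the goal is to show that the class $[E] \in \Jac(C_4)(\Q(\zeta_8))$ is fixed by both generators $\sigma_3$ and $\sigma_5$ of $\Gal(\Q(\zeta_8)/\Q)$. By Remark \ref{Remark:MordellWeilGroupBasis} we know $E = 2B_2 - 2B_0$ and hence $[E] = 2e_4$ in terms of the basis $e_1,\dots,e_6$. So it suffices to read off $\sigma_3(2e_4)$ and $\sigma_5(2e_4)$ from the matrices $s_3, s_5$ computed in the proof of Proposition \ref{Proposition:MordellWeilGroupOverQ}: the fourth column of $s_3$ is $(0,0,2,3,0,0)^{t}$, so $\sigma_3(e_4) = 2e_3 + 3e_4$ and thus $\sigma_3(2e_4) = 4e_3 + 6e_4 = 2e_4$ since $4e_i = 0$; similarly the fourth column of $s_5$ gives $\sigma_5(e_4) = 3e_4$, so $\sigma_5(2e_4) = 6e_4 = 2e_4$. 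This shows $[E]$ is Galois-invariant. Alternatively, and perhaps cleaner, one can argue geometrically: $\sigma_3$ and $\sigma_5$ act on the two points $[1:0:\zeta_8^3]$ and $[1:0:\zeta_8^7]$ by either fixing each or swapping them (since $\zeta_8^3$ and $\zeta_8^7 = -\zeta_8^3$ are the two roots of $T^2 + \zeta_8^6 = T^2 - \sqrt{-1}\,$... — one checks $\sigma_3$ sends $\zeta_8^3 \mapsto \zeta_8^9 = \zeta_8$ and $\zeta_8^7 \mapsto \zeta_8^{21} = \zeta_8^5$, so actually $\sigma_3$ permutes $\{A_i, B_i\}$ as in the table), and since $E$ is symmetric in an appropriate sense any swap either fixes $E$ or sends it to $-E$; but $2[E] = 0$ so $-[E] = [E]$ and invariance follows. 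I would present the matrix computation as the primary argument since the matrices are already available.

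For part (2), the key point is that Galois-invariance of $[E]$ in $\Pic^0(C_4)(\Q(\zeta_8))$ does \emph{not} imply that $E$ is linearly equivalent to a $\Q$-rational divisor; the obstruction lives in $\Br(C_4/\Q)$, via the exact sequence in the Remark following Theorem \ref{MainTheorem}. To show the obstruction is non-trivial, I would combine Theorem \ref{MainTheorem} (1) with part (2) of the present Theorem \ref{MainTheorem:NontrivialTorsor}-adjacent statements already proved — concretely: if $E$ were linearly equivalent to a $\Q$-rational divisor $D$, then since $\deg E = 0$ we would have $[D] \in \Pic^0(C_4)$, i.e. $[E] \in \Pic^0(C_4) = (\Z/2\Z)[D_1 - D_0] \oplus (\Z/2\Z)[D_2 - D_0]$. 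But in $\Jac(C_4)(\Q(\zeta_8))$ we have computed $[E] = 2e_4$, $[D_1 - D_0] = 2e_3 + 2e_4$, and $[D_2 - D_0] = 2e_1 + 2e_2 + 2e_3 + 2e_4$ (Remark \ref{Remark:MordellWeilGroupBasis}), and the four elements $0, 2e_3 + 2e_4, 2e_1 + 2e_2 + 2e_3 + 2e_4, 2e_1 + 2e_2$ are pairwise distinct from $2e_4$ inside the $2$-torsion subgroup $(\Z/2\Z)^{\oplus 6}$ spanned by $2e_1,\dots,2e_6$. Hence $[E] \notin \Pic^0(C_4)$, a contradiction. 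Therefore $E$ is not linearly equivalent to any $\Q$-rational divisor, and by the exactness of $0 \to \Pic^0(C_4) \xrightarrow{\iota_{C_4}} \Jac(C_4)(\Q) \xrightarrow{\Br} \Br(C_4/\Q)$ the image $\Br([E])$ is non-zero in $\Br(C_4/\Q)$.

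The only genuinely delicate point — and the one I would flag as the main obstacle — is making the translation between ``$[E]$ is fixed by $\Gal(\Q(\zeta_8)/\Q)$'' (an assertion about the \emph{Jacobian}, whose Galois action is given by $s_3, s_5$) and ``$E$ is linearly equivalent to a $\Q$-divisor'' (an assertion about whether a cocycle splits), entirely rigorous; the rest is bookkeeping in $(\Z/2\Z)^{\oplus 6}$. Concretely I must be careful that the Galois action on $\Jac(C_4)(\Q(\zeta_8))$ is the \emph{linear} action recorded by the matrices, whereas the action on $\Pic^1$ or on a non-invariant representative is \emph{affine} (twisted by a cocycle), which is exactly the content of Proposition \ref{Proposition:Pic1}; the non-vanishing of $\Br(C_4/\Q)$ is precisely the statement that this affine-vs-linear discrepancy cannot be absorbed. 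I expect no further surprises: all the needed group-theoretic facts and divisor relations are already established in the earlier sections, so the proof is essentially an assembly of Theorem \ref{MainTheorem} (1), Remark \ref{Remark:MordellWeilGroupBasis}, and the matrices $s_3, s_5$.
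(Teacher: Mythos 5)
Your proof of part (1) is correct and is essentially the paper's own argument: $[E]=2e_4$, and the fourth columns of $s_3$ and $s_5$ give $\sigma_3(2e_4)=4e_3+6e_4=2e_4$ and $\sigma_5(2e_4)=6e_4=2e_4$ in $(\Z/4\Z)^{\oplus 5}\oplus(\Z/2\Z)$.

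Part (2), however, contains a genuine gap: the argument is circular within the logical structure of the paper. You deduce $[E]\notin\Pic^0(C_4)$ from Theorem \ref{MainTheorem} (1), i.e.\ from the equality $\Pic^0(C_4)=(\Z/2\Z)[D_1-D_0]\oplus(\Z/2\Z)[D_2-D_0]$. But the paper proves Theorem \ref{MainTheorem} (1) by combining the lower bound $|\Pic^0(C_4)|\geq 4$ (Lemma \ref{Lemma:DivisorClassBitangent} (3)) with the upper bound $|\Pic^0(C_4)|<|\Jac(C_4)(\Q)|=8$, and that upper bound is obtained precisely from Proposition \ref{Proposition:BrauerObstruction} (2) --- the statement you are trying to prove. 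Without part (2) the available information only gives $4\leq|\Pic^0(C_4)|\leq 8$; since $\Pic^0(C_4)$ is a subgroup of the $8$-element group $\Jac(C_4)(\Q)$ containing the $4$-element subgroup generated by $[D_1-D_0]$ and $[D_2-D_0]$, it is a priori either that subgroup or all of $\Jac(C_4)(\Q)$, and in the latter case it would contain $[E]=2e_4$ with no contradiction. In fact, given Lemma \ref{Lemma:DivisorClassBitangent} and Proposition \ref{Proposition:MordellWeilGroupOverQ}, the assertion ``$[E]\notin\Pic^0(C_4)$'' is logically \emph{equivalent} to Theorem \ref{MainTheorem} (1), so neither can be used to establish the other. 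What is missing is an independent computation of the Brauer obstruction, and this is exactly what the paper supplies: it works locally at the real place, exhibits $u_\tau=Y^2/\bigl((X-\zeta_8 Z)(X-\zeta_8^3 Z)\bigr)$ with $\divi(u_\tau)=E-\tau(E)$ for the complex conjugation $\tau=\sigma_3\sigma_5$, and evaluates the resulting $2$-cocycle, $a_{\tau,\tau}=u_\tau\cdot\tau(u_\tau)=Y^4/(X^4+Z^4)=-1$ on $C_4$, showing that $\Br([E])$ maps to the nontrivial class of $\Br(\R)$. That explicit local computation (or some substitute for it) is the actual content of part (2); the bookkeeping in $(\Z/2\Z)^{\oplus 6}$ cannot replace it. You correctly flag the linear-versus-affine subtlety as the delicate point, but the resolution you offer routes through a theorem that is downstream of the very proposition at hand.
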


\begin{proof}
(1) Since $[E] = 2 e_4$, we have
\[
  \sigma_3([E]) = 2 \sigma_3(e_4) = 2 e_4 = [E], \quad
  \sigma_5([E]) = 2 \sigma_5(e_4) = 2 e_4 = [E].
\]
(See Remark \ref{Remark:MordellWeilGroupBasis}
and the table in the proof of Proposition \ref{Proposition:Pic1}.)
Hence $[E]$ is invariant under the action of $\Gal(\Q(\zeta_8)/\Q)$.

(2) 
By (1), we see that the divisors
$E - \sigma_3(E)$, $E - \sigma_5(E)$, $2 [E]$
are linearly equivalent to $0$.
To calculate the obstruction in the relative Brauer group explicitly,
we shall give the rational functions which give linear equivalences.
We have
	\begin{align*}
		\sigma_3(E) &=  2 [1:0:\zeta_8] - 2 [1:0:\zeta_8^5], \\
		\sigma_5(E) &=  2 [1:0:\zeta_8^7] - 2 [1:0:\zeta_8^3] = -E, \\
		\sigma_3\sigma_5(E) &=  2 [1:0:\zeta_8^5] - 2 [1:0:\zeta_8] = -\sigma_3(E).
	\end{align*}
Hence we have
	\begin{align*}
		2E &= \mathrm{div}\left( \frac{X - \zeta_8^5 Z}{X - \zeta_8Z} \right), \\ 
		E + \sigma_3(E) &= \mathrm{div}\left( \frac{Y^2}{(X - \zeta_8 Z)(X - \zeta_8^3 Z)} \right), \\ 
		E - \sigma_3(E) &= \mathrm{div}\left(\frac{Y^2}{(X - \zeta_8 Z)(X - \zeta_8^7Z)} \right).
	\end{align*}

To prove (2), it is enough to show that the image of $\Br([E])$ in
\[ \Br(\R) = H^2(\Gal(\C/\R), \C^{\times}) \cong \{ \pm 1 \} \]
is non-trivial.
We shall calculate the cocycle explicitly.
Let $\tau$ denote the complex conjugation.
We have $\tau = \sigma_3 \sigma_5$ in $\Gal(\Q(\zeta_8)/\Q)$.
We have
$\tau(E) = \sigma_3\sigma_5(E) = -\sigma_3(E)$.
	Let $u_1 = 1$ and 
	\[
	u_\tau = \frac{Y^2}{(X - \zeta_8 Z)(X - \zeta_8^3 Z)}.
	\]
	We have $\mathrm{div}(u_{\tau}) = E + \sigma_3(E) = E - \tau(E)$.
	Thus a 2-cocycle
	$a \colon \{1, \tau\}^2 \to \{ \pm 1 \}$
	corresponding to $\Br([E])$ is given by
	\[
		a_{s, t} = \frac{u_{s} \cdot s(u_{t})}{u_{st}}
		\qquad
		(s,t \in \Gal(\C/\R) = \{ 1, \tau \}).
	\]
	Since $u_1 = 1$, we have $a_{1,1} = a_{1,\tau} = a_{\tau, 1} = 1$.
	When $s = t = \tau$, we have
	\begin{align*}
		a_{\tau, \tau} &= \frac{u_{\tau} \cdot \tau(u_{\tau})}{u_{\tau^2}} = u_{\tau} \cdot \tau(u_{\tau}) \\
		&= \frac{Y^2}{(X - \zeta_8 Z)(X - \zeta_8^3 Z)} \cdot \frac{Y^2}{(X - \zeta_8^7 Z)(X - \zeta_8^5 Z)}
		= \frac{Y^4}{X^4 + Z^4} \\
		&= -1.
	\end{align*}
	Therefore, the cocycle $a$ gives a non-trivial element of
	$\Br(\R) \cong \{ \pm 1 \}$
	corresponding to Hamilton's quarternion algebra $\mathbb{H}$.
\end{proof}

\section{Proof of Theorem \ref{MainTheorem} and Theorem \ref{MainTheorem:NontrivialTorsor}}

\subsection*{Proof of Theorem \ref{MainTheorem}}

We shall complete the proof of Theorem \ref{MainTheorem}.
We consider the following exact sequence
\[
\xymatrix{
0 \ar^-{}[r] & \Pic^0(C_4) \ar[r] & \Jac(C_4)(\Q) \ar^-{}[r] & \Br(C_4/\Q). }
\]

We have $|\Pic^0(C_4)| \geq 4$
by Lemma \ref{Lemma:DivisorClassBitangent} (3).

On the other hand,
we have $|\Jac(C_4)(\Q)| = 8$
by Proposition \ref{Proposition:MordellWeilGroupOverQ}.
(See also Remark \ref{Remark:MordellWeilGroupBasis}.)
Since $[E] \in \Jac(C_4)(\Q)$ has a non-trivial image in $\Br(C_4/\Q)$
by Proposition \ref{Proposition:BrauerObstruction} (2),
we have
$|\Pic^0(C_4)| < |\Jac(C_4)(\Q)| = 8$.

Theorem \ref{MainTheorem} follows from these results.

\subsection*{Proof of Theorem \ref{MainTheorem:NontrivialTorsor}}

Since the quartic $C_4$ has a $\Q$-rational divisor of degree $2$
(such as $D_i$ ($0 \leq i \leq 3$) in Section \ref{Section:Bitangents}),
we have an isomorphism
$\Pic^{2d+1}_{C_4/\Q} \cong \Pic^{1}_{C_4/\Q}$ over $\Q$.
Hence it is enough to show 
$\Pic^{1}_{C_4/\Q}$ has no $\Q$-rational point.
This follows from Proposition \ref{Proposition:Pic1} (2).

\begin{rem}
Our calculation shows that,
for $K = \Q(\sqrt{-1}), \Q(\sqrt{2})$ and $\Q(\sqrt{-2})$,
$\Pic^{1}_{C_4/K}$ is a non-trivial torsor
under $\Jac(C_4 \otimes_{\Q} K)$.
\end{rem}

\section{Divisors of degree $2$ and quadratic points}

Recall that we have divisors $D_0, D_1, D_2, D_3$
of degree $2$ over $\Q$;
see Section \ref{Section:Bitangents}.
We shall show that these are the only divisors of degree $2$
on $C_4$ over $\Q$.

\begin{prop}
\label{Proposition:PicardGroup2}
The degree $2$ part $\Pic^2(C_4)$ of the Picard group
consists of the four elements $[D_i]$ ($0 \leq i \leq 3$),
i.e.,
\[ \Pic^2(C_4) = \{\, [D_0],\ [D_1],\ [D_2],\ [D_3] \,\} \]
In particular, every $\Q$-rational divisor of degree $2$ on $C_4$
is linearly equivalent to an effective divisor.
\end{prop}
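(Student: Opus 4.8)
The plan is to combine the computation of $\Jac(C_4)(\Q)$ from Proposition \ref{Proposition:MordellWeilGroupOverQ} with the fact that $C_4$ carries a $\Q$-rational divisor of degree $2$, which trivializes the torsor $\Pic^2_{C_4/\Q}$. First I would observe that the map $[D] \mapsto [D + D_0]$ gives a bijection $\Jac(C_4)(\Q) = \Pic^0(C_4) \overset{1:1}{\to} \Pic^2(C_4)$; indeed $D_0$ is defined over $\Q$, so this is $\Gal(\overline{\Q}/\Q)$-equivariant, and taking Galois invariants yields $|\Pic^2(C_4)| = |\Pic^0(C_4)|$. By Theorem \ref{MainTheorem} (1) we have $|\Pic^0(C_4)| = 4$, so $\Pic^2(C_4)$ has exactly four elements.

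Next I would exhibit four pairwise distinct elements of $\Pic^2(C_4)$, namely $[D_0], [D_1], [D_2], [D_3]$: these are all defined over $\Q$ (Section \ref{Section:Bitangents}), and by Lemma \ref{Lemma:DivisorClassBitangent} (1) they are pairwise non-equivalent. Since we already know $\Pic^2(C_4)$ has exactly four elements, it follows that $\Pic^2(C_4) = \{[D_0], [D_1], [D_2], [D_3]\}$. Finally, the last sentence is immediate: each $D_i$ is effective of degree $2$, so every class in $\Pic^2(C_4)$ is represented by one of the effective divisors $D_i$.

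There is essentially no obstacle here — the proposition is a direct corollary of the main theorem once one notes that translation by the $\Q$-rational divisor $D_0$ identifies $\Pic^0$ and $\Pic^2$ as Galois modules. The only point requiring the slightest care is making sure the counting is tight: we must use both the lower bound of four elements coming from the four distinct bitangent classes (Lemma \ref{Lemma:DivisorClassBitangent}) and the upper bound of four coming from $|\Pic^0(C_4)| = 4$ in Theorem \ref{MainTheorem} (1); neither alone suffices, but together they pin down $\Pic^2(C_4)$ exactly.
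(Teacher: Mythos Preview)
Your approach is essentially identical to the paper's: translate by $[D_0]$ to get a bijection $\Pic^0(C_4) \cong \Pic^2(C_4)$, invoke Theorem~\ref{MainTheorem}~(1) for $|\Pic^0(C_4)| = 4$, and use Lemma~\ref{Lemma:DivisorClassBitangent}~(1) to see that the four $[D_i]$ are distinct.

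There is, however, a conceptual slip in your exposition worth flagging precisely because it touches the main theme of the paper. You write ``$\Jac(C_4)(\Q) = \Pic^0(C_4)$'' and justify the bijection by ``taking Galois invariants.'' But $\Jac(C_4)(\Q)$ and $\Pic^0(C_4)$ are \emph{not} equal here: by Theorem~\ref{MainTheorem}, $\Jac(C_4)(\Q)$ has $8$ elements while $\Pic^0(C_4)$ has $4$, the discrepancy being exactly the nontrivial relative Brauer class of $[E]$. Taking Galois invariants of the translation $\Pic^0(C_{4,\overline{\Q}}) \to \Pic^2(C_{4,\overline{\Q}})$ yields the bijection $\Jac(C_4)(\Q) \to \Pic^2_{C_4/\Q}(\Q)$ of rational points on the Picard \emph{scheme}, which is $8$-to-$8$, not what you want. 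The correct (and simpler) justification is that $D_0$ is an honest $\Q$-rational divisor, so $[D] \mapsto [D + D_0]$ is already a bijection on the Picard \emph{group} $\Pic(C_4)$ of $\Q$-rational divisor classes, no Galois descent needed; this is exactly what the paper does. Once this is cleaned up, your argument and the paper's coincide.
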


\begin{proof}
We have a bijection
\[ \Pic^2(C_4) \overset{\cong}{\to} \Pic^0(C_4),
   \qquad [D] \mapsto [D - D_0]. \]
Hence
$|\Pic^2(C_4)| = |\Pic^0(C_4)| = 4$
by Theorem \ref{MainTheorem} (1).
It remains to show that
the four divisor classes $[D_i]$ ($0 \leq i \leq 3$)
are distinct from each other.
This follows from Lemma \ref{Lemma:DivisorClassBitangent} (1).
\end{proof}

\subsection*{Proof of Theorem \ref{MainTheorem:QuadraticPoints}}

Since $\Pic^2(C_4)$ is explicitly calculated,
we can determine points on $C_4$ defined over a quadratic extension of $\Q$
by Faddeev's method \cite{Faddeev}.
(See \cite[Lemma 7.1]{IshitsukaItoOhshita:FermatQuartic} for a precise statement.)

Since the proof is essentially same as
\cite[Section 7]{IshitsukaItoOhshita:FermatQuartic},
we briefly give an outline of the proof.
Since $C_4$ has no $\R$-rational point, it has no $\Q$-rational point.
Let $P \in C_4$ be a point defined over a quadratic extension of $\Q$,
and $\overline{P}$ be the conjugate of $P$.
By Proposition \ref{Proposition:PicardGroup2},
the effective divisor $P + \overline{P}$ is linearly equivalent to $D_i$
for some $0 \leq i \leq 3$.
To prove Theorem \ref{MainTheorem:QuadraticPoints},
it is enough to show $P + \overline{P}$ is equal to $D_i$ as divisors.
If $P + \overline{P} \neq D_i$,
there is a non-zero rational function $f$ on $C_4$
with $\divi(f) = P + \overline{P} - D_i$.
The morphism $C \to \PP^1$ induced by $f$ has degree $2$.
This is absurd since $C_4$ is non-hyperelliptic.

\section{Linear determinantal representations}

In this final section, we give an application to
the local--global property of linear determinantal representations.

Recall that
a \textit{linear determinantal representation} of a plane curve
$C \subset \PP^2$ over $K$ of degree $d$ is a triple of $d$ by $d$ matrices
$M = (M_0, M_1, M_2)$ ($M_0, M_1, M_2 \in \mathrm{Mat}_d(K)$)
such that
\[ \det(XM_0 + YM_1 + ZM_2) = 0 \]
is a defining equation of the curve $C \subset \PP^2$.
Two linear determinantal representations
$M = (M_0, M_1, M_2)$, $M' = (M'_0, M'_1, M'_2)$
are said to be \textit{equivalent} if
there are invertible matrices $P, Q \in \GL_d(K)$
such that $M'_i = P M_i Q$ for $i = 0,1,2$.
It is well-known that
there is a natural bijection between the following sets:
\begin{itemize}
\item the set of equivalence classes of linear determinantal representations of $C$ over $K$, and
\item the set of linear equivalence classes of $K$-rational divisors $D$ on $C$ of degree $g(C) - 1 = d(d-3)/2$
which are not linearly equivalent to effective divisors.
(Here $g(C) = (d-1)(d-2)/2$ is the genus of $C$.)
\end{itemize}
(See \cite[Theorem 1]{IshitsukaItoOhshita:LDR}. See also \cite{Beauville}, \cite{Dolgachev}.)

\subsection*{Proof of Theorem \ref{MainTheorem:LDR}}

To prove Theorem \ref{MainTheorem:LDR} (1),
it is enough to prove that every divisor of degree $2$
on $C_4$ over $\Q$ is linearly equivalent to an effective divisor.
This follows from Proposition \ref{Proposition:PicardGroup2}.

Theorem \ref{MainTheorem:LDR} (2) follows from the following
general results on linear determinantal representations over large fields.
Recall the a field $K$ is called a \textit{large field}
if any smooth algebraic curve over $K$ with a $K$-rational point
has infinitely many $K$-rational points \cite{Pop}.
(Large fields are also called \textit{ample fields} in \cite{Jarden}.)
Note that $\R$ and $p$-adic fields $\Q_p$ are large fields.

\begin{prop}
\label{Proposition:LargeFieldLDR}
Let $K$ be a large field.
Let $C \subset \PP^2$ be a smooth plane curve of degree $d \geq 3$
defined over $K$.
Assume that there exists a $K$-rational divisor of
degree $g(C) - 1 = d(d-3)/2$ on $C$.
Then $C$ admits infinitely many equivalence classes of
linear determinantal representations over $K$.
\end{prop}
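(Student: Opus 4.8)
The plan is to use the bijection recalled just above (from \cite[Theorem 1]{IshitsukaItoOhshita:LDR}) between equivalence classes of linear determinantal representations of $C$ over $K$ and linear equivalence classes of $K$-rational divisors of degree $g(C)-1$ that are \emph{not} linearly equivalent to effective divisors. So it suffices to produce infinitely many such divisor classes. Let $D_0$ be the given $K$-rational divisor of degree $g-1$ (with $g = g(C)$). First I would observe that we may assume $D_0$ is not linearly equivalent to an effective divisor: if it were, we would replace it, but in fact the cleaner route is to work in $\Pic^{g-1}_{C/K}$ and count. The Riemann--Roch / theta-divisor picture says that the locus $W_{g-1} \subset \Pic^{g-1}$ of classes linearly equivalent to effective divisors is a proper closed subvariety (a translate of the theta divisor $\Theta$), of dimension $g-1$ inside the $g$-dimensional $\Pic^{g-1}$. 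So the complement is open dense. The task reduces to showing this complement has infinitely many $K$-points.

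The key step is to exhibit infinitely many $K$-rational points of $\Pic^{g-1}_{C/K}$ and argue that all but a "small" set of them lie off $W_{g-1}$. Here is where the large-field hypothesis enters. Since $C$ has a $K$-rational divisor $D_0$ of degree $g-1$, the torsor $\Pic^{g-1}_{C/K}$ under $\Jac(C)$ has a $K$-point, hence $\Pic^{g-1}_{C/K} \cong \Jac(C)$ over $K$; translation by $[D_0]$ identifies the two. It therefore suffices to show $\Jac(C)(K)$ is infinite, and that $\Jac(C)(K)$ is not contained in the translate $W_{g-1} - [D_0]$ of the theta divisor. For infinitude: $C$ is a smooth curve with a $K$-rational divisor of degree $g - 1 \geq 0$ (when $d = 3$, $g-1 = 0$, i.e. $C$ has a $K$-rational point; when $d \geq 4$, $g - 1 \geq 3 > 0$ and one extracts a $K$-rational point after a base change of bounded degree, or argues directly). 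Over a large field $K$, a smooth curve with a $K$-point has infinitely many $K$-points, and these generate an infinite subgroup of $\Jac(C)(K)$ via differences (e.g. fixing a base point $P_0$, the images $[P - P_0]$ for $P \in C(K)$ are infinitely many distinct points of $\Jac(C)(K)$, since the Abel--Jacobi map $C \to \Jac(C)$ is a closed immersion for $g \geq 2$, and for $g = 1$ the curve is its own Jacobian). So $\Jac(C)(K)$ is infinite.

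The remaining (and main) obstacle is to rule out the possibility that this infinite set of $K$-points all lands inside the theta translate $W_{g-1} - [D_0]$, i.e. that every $K$-rational divisor class of degree $g-1$ is effective. For this I would use that $W_{g-1} - [D_0]$ is a proper \emph{closed} subscheme of the abelian variety $\Jac(C)$, so its complement $U$ is a dense open subscheme; it suffices to find a single $K$-point of $U$, since then adding any element of the infinite group $\Gamma \coloneqq \langle [P - P_0] : P \in C(K)\rangle$ that does not move us back into $W_{g-1} - [D_0]$ produces infinitely many — more precisely, $U(K)$ is infinite because $U$ is a nonempty open subvariety of $\Jac(C)$ and, over a large field, once $\Jac(C)(K)$ is Zariski dense (which follows from infinitude together with the fact that a positive-dimensional subgroup would itself be an abelian subvariety with infinitely many $K$-points, again using large-field-plus-a-point), a dense open must contain infinitely many $K$-points. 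To show $\Jac(C)(K)$ is Zariski dense, or directly that $U(K) \neq \emptyset$: translate the problem back to $C$, where $U$ corresponds to the set of degree-$(g-1)$ classes with $h^0 = 0$; pick $g-1$ general $K$-points $P_1,\dots,P_{g-1}$ on $C$ (possible since $C(K)$ is infinite, hence Zariski dense in $C$, by the large-field hypothesis applied to $C$ itself) so that $[P_1 + \cdots + P_{g-1}]$ is a general point of $W_{g-1}$, and then $[P_1 + \cdots + P_{g-1} + (P - P_0)]$ for $P \in C(K)$ varying gives a positive-dimensional family of $K$-classes, which cannot all stay in the $(g-1)$-dimensional $W_{g-1}$ unless $C$ is degenerate; hence infinitely many of them lie in $U$, and by the bijection we get infinitely many inequivalent linear determinantal representations of $C$ over $K$. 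The delicate point to get right is the dimension count showing a one-parameter $K$-family of effective divisors of degree $g-1$ cannot be forced to stay inside $W_{g-1}$ unless it is constant in $\Pic^{g-1}$, which uses that $C \hookrightarrow \Jac(C)$ is nondegenerate (spans no proper abelian subvariety translate), a standard fact for curves of genus $\geq 1$.
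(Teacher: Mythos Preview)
Your plan has a genuine gap: you apply the large-field hypothesis to $C$ itself, asserting that $C(K)$ is infinite and then using differences of $K$-points of $C$ to produce elements of $\Pic^0(C)$. But nothing in the hypotheses guarantees $C(K)\neq\emptyset$. A $K$-rational divisor of degree $g-1$ need not arise from $K$-rational points; indeed the quartic $C_4$ of this paper has $\R$-rational divisors of degree $2$ but no $\R$-rational points at all. (Your parenthetical claim that for $d=3$ a degree-$0$ rational divisor yields a $K$-point on $C$ is also false: the zero divisor always exists.) So your infinitude argument, your choice of ``$g-1$ general $K$-points $P_1,\dots,P_{g-1}$ on $C$'', and your Zariski-density argument all break down simultaneously.

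The paper's proof avoids this by never working with $C(K)$. Instead it applies the large-field property to a smooth curve $C'\subset \Jac(C)$ passing through the identity $0$ (which is always a $K$-point), chosen so that $C'\not\subset Z$ for a suitable proper closed subvariety $Z$. This yields infinitely many $K$-points of $\Jac(C)$ off $Z$. The second issue you glossed over --- that a $K$-point of $\Jac(C)$ need not come from $\Pic^0(C)$ because of the Brauer obstruction --- is handled by a multiplication-by-$n$ trick: one first shows $\Br(C/K)$ is killed by some integer $n$, sets $Z\coloneqq [n]^{-1}(W-[D_0])$, and then for $P\in\Jac(C)(K)\setminus Z$ the class $[n](P)+[D_0]$ lies outside $W$ and has trivial Brauer class, hence comes from an honest $K$-rational divisor. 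Your route via differences of points on $C$ would have sidestepped the Brauer issue automatically, but only at the cost of assuming $C(K)\neq\emptyset$, which is precisely what is not available here.
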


\begin{proof}
For smooth cubics, this result is proved by the first author in
\cite[Theorem 8.2]{Ishitsuka:Proportion}.
Essentially the same proof applies to smooth curves of higher degree.

There exists an exact sequence
\[
\xymatrix{
0 \ar^-{}[r] & \Pic(C) \ar[r] & \Pic_{C/K}(K) \ar^-{\Br}[r] & \Br(C/K) \ar[r] & 0. }
\]
(See \cite[Theorem 2.1]{CiperianiKrashen},
\cite[Section 8.1, Proposition 4]{BoschLuetkebohmertRaynaud}.)
Take a finite separable extension $L/K$ such that $\Pic^{g(C) - 1}_{C/K}$ has an $L$-rational point.
We put $n \coloneqq [L : K]$.
Then the relative Brauer group $\Br(C/K)$ is killed by $n$ by
\cite[Lemma 4.2]{Ishitsuka:Proportion}.

Let $D_0$ be a $K$-rational divisor of degree $g(C) - 1$ on $C$.
Since $[D_0] \in \Pic^{g(C) - 1}_{C/K}$ comes from a $K$-rational divisor,
we have $\Br([D_0]) = 0$.

We have an isomorphism
$\Pic^{g(C) - 1}_{C/K} \cong \Jac(C)$ sending $[D]$ to $[D] - [D_0]$.
Let
\[ W \subset \Pic^{g(C) - 1}_{C/K} \]
be the closed subscheme
whose geometric points corresponds to divisors of degree $g(C) - 1$
which are linearly equivalent to effective divisors.
Then we have $\dim W = g(C) - 1$ because $W$ coincides with the image of the map
\[ \underbrace{C \times \cdots \times C}_{\text{$(g(C) - 1)$ times}}  \to \Pic^{g(C) - 1}_{C/K}. \]
Let $[n] \colon \Jac(C) \to \Jac(C)$ be
the multiplication-by-$n$ isogeny
and consider the subvariety $Z \subset \Jac(C)$ defined by
\[ Z \coloneqq [n]^{-1}(W - [D_0]). \]
It is a subvariety of $\Jac(C)$ of dimension $g(C) - 1$ defined over $K$.

Since $K$ is a large field, there exist infinitely many
$K$-rational points $P \in \Jac(C)(K)$ outside $Z$.
(To see this, take a smooth curve $C' \subset \Jac(C)$ passing through $0$
with $C' \not \subset Z$.
Since $K$ is a large field, the curve $C'$ contains infinitely many
$K$-rational points.)
Since $[n](P) \notin W - [D_0]$,
we see that $[n](P) + [D_0]$ is a $K$-rational point of $\Pic^{g(C) - 1}_{C/K}$
which does not sit in $W$.
Since
\[
  \Br([n](P) + [D_0]) = n \Br(P) + \Br([D_0]) = 0,
\]
the $K$-rational point $[n](P) + [D_0] \in \Pic^{g(C) - 1}_{C/K}(K)$
comes from a $K$-rational divisor on $C$
which is not linearly equivalent to an effective divisor.

Therefore, the plane curve $C \subset \PP^2$ admits
infinitely many equivalence classes of linear determinantal representations
over $K$.
\end{proof}

The quartic $C_4$ has a $\Q$-rational divisor of degree $2$
such as $D_i$ ($0 \leq i \leq 3$) in Section  \ref{Section:Bitangents}.
By Proposition \ref{Proposition:LargeFieldLDR},
we conclude that $C_4$ admits linear determinantal representations
over $\R$ and $p$-adic fields $\Q_p$
The proof of Theorem \ref{MainTheorem:LDR} is complete.

\begin{rem}
A linear determinantal representation $M = (M_0, M_1, M_2)$
is called \textit{symmetric} if the matrices $M_0, M_1, M_2$ are symmetric matrices.
We may ask similar questions for the existence of
\textit{symmetric determinantal representations}.
Symmetric determinantal representations are classical objects
related to bitangents and theta characteristics
studied by algebraic geometers in the 19th century.
(For historical account, see \cite{Beauville}, \cite{Dolgachev}.)
The local--global property of symmetric determinantal
representations is studied in 
\cite{IshitsukaIto:SDR}, \cite{IshitsukaIto:SDRChar2}, \cite{IIOTU}.
Recently, we constructed a smooth plane quartic over $\Q$ which violates
the local--global property of symmetric determinantal representations; see \cite[Section 7]{IIOTU} for details.
\end{rem}

\appendix

\section{Calculation of divisors}

Here is a sample source code for Singular (version 4.2.1)
which confirms the formulae on divisors used in this paper.

\begin{lstlisting}[basicstyle=\tiny\ttfamily, numbers=left, frame=single, language=C++]
// Calculation of divisors on the quartic
// X^4 + Y^4 + Z^4 = 0 over Q(zeta_8)
// d = zeta_{24}
// minimal polynomial of d = X^8 - X^4 + 1
//
// Singular (version 4.2.1)

LIB "divisors.lib";
ring r=(0,d),(x,y,z),dp; minpoly = d^8 - d^4 + 1;
ideal I = x^4 + y^4 + z^4; qring Q = std(I);
number z8 = d^3; number z4 = z8^2; number z3 = d^8;
divisor A0 = makeDivisor(ideal(x, z8^7 * y - z4^0 * z), ideal(1));
divisor A1 = makeDivisor(ideal(x, z8^7 * y - z4^1 * z), ideal(1));
divisor A2 = makeDivisor(ideal(x, z8^7 * y - z4^2 * z), ideal(1));
divisor A3 = makeDivisor(ideal(x, z8^7 * y - z4^3 * z), ideal(1));
divisor B0 = makeDivisor(ideal(y, z8^7 * x - z4^0 * z), ideal(1));
divisor B1 = makeDivisor(ideal(y, z8^7 * x - z4^1 * z), ideal(1));
divisor B2 = makeDivisor(ideal(y, z8^7 * x - z4^2 * z), ideal(1));
divisor B3 = makeDivisor(ideal(y, z8^7 * x - z4^3 * z), ideal(1));
divisor C0 = makeDivisor(ideal(z, x - z8 * z4^0 * y), ideal(1));
divisor C1 = makeDivisor(ideal(z, x - z8 * z4^1 * y), ideal(1));
divisor C2 = makeDivisor(ideal(z, x - z8 * z4^2 * y), ideal(1));
divisor C3 = makeDivisor(ideal(z, x - z8 * z4^3 * y), ideal(1));
divisor D0 = makeDivisor(ideal(z3   * x - y, z3^2 * x - z), ideal(1))
           + makeDivisor(ideal(z3^2 * x - y, z3   * x - z), ideal(1));
divisor D1 = makeDivisor(ideal(-z3   * x - y, z3^2 * x - z), ideal(1))
           + makeDivisor(ideal(-z3^2 * x - y, z3   * x - z), ideal(1));
divisor D2 = makeDivisor(ideal(z3   * x - y, -z3^2 * x - z), ideal(1))
           + makeDivisor(ideal(z3^2 * x - y, -z3   * x - z), ideal(1));
divisor D3 = makeDivisor(ideal(-z3   * x - y, -z3^2 * x - z), ideal(1))
           + makeDivisor(ideal(-z3^2 * x - y, -z3   * x - z), ideal(1));
divisor E1 = A1 + multdivisor(-1, B0);
divisor E2 = A2 + multdivisor(-1, B0);
divisor E3 = B1 + multdivisor(-1, B0);
divisor E4 = B2 + multdivisor(-1, B0);
divisor E5 = C1 + multdivisor(-1, B0);
divisor E6 = A1 + B1 + C1 + A2 + B2 + C2 + multdivisor(-6, B0);
divisor Lin = multdivisor(4, makeDivisor(ideal(x-z, y), ideal(1)));
divisor E =
    multdivisor( 2, makeDivisor(ideal(z8^3 * x - z, y), ideal(1)))
  + multdivisor(-2, makeDivisor(ideal(z8^7 * x - z, y), ideal(1)));

proc lineqcheck(divisor D1,divisor D2){
  if(linearlyEquivalent(D1,D2)[1]!=0){ return(1);}
  else{ return(0);};};
proc divcheck(string S,divisor D1,divisor D2){
  if(lineqcheck(D1, D2)==1){printf("%s : OK", S);}
  else{printf("%s : x", S);};};
proc prdivcheck(string S, divisor D, poly F1, poly F2){
  if(isEqualDivisor(D, makeDivisor(ideal(F1), ideal(F2)))==1)
  {printf("%s : OK", S);}
  else{printf("%s : x", S);};};

/// Check linear equivalences of divisors
print("");
print("Check linear equivalences of divisors");
divcheck("alpha_0 = 2e_1 + e_2 + 2e_3 + e_4",
  A0 + multdivisor(-1,B0), E1+E1 + E2 + E3+E3 + E4);
divcheck("alpha_1 = e_1",
  A1 + multdivisor(-1,B0), E1);
divcheck("alpha_2 = e_2",
  A2 + multdivisor(-1,B0), E2);
divcheck("alpha_3 = e_1 + 2e_2 + 2e_3 + 4e_4",
  A3 + multdivisor(-1,B0), E1 + E2+E2 + E3+E3 + E4+E4+E4);
divcheck("beta_1 = e_3",
  B1 + multdivisor(-1,B0), E3);
divcheck("beta_2 = e_4",
  B2 + multdivisor(-1,B0), E4);
divcheck("beta_3 = 3e_3 + 3e_4",
  B3 + multdivisor(-1,B0), E3+E3+E3 + E4+E4+E4);
divcheck("gamma_0 = 3e_1 + 3e_2 + e_3 + e_5 + e_6",
  C0 + multdivisor(-1,B0), E1+E1+E1 + E2+E2+E2 + E3 + E5 + E6);
divcheck("gamma_1 = e_5",
  C1 + multdivisor(-1,B0), E5);
divcheck("gamma_2 = 3e_1 + 3e_2 + 3e_3 + 3e_4 + 3e_5 + e_6",
  C2 + multdivisor(-1,B0), E1+E1+E1 + E2+E2+E2
  + E3+E3+E3 + E4+E4+E4 + E5+E5+E5 + E6);
divcheck("gamma_3 = 2e_1 + 2e_2 + e_4 + 3e_5",
  C3 + multdivisor(-1,B0), E1+E1 + E2+E2 + E4 + E5+E5+E5);

// Points of tangency of bitangents
print("");
print("Points of tangency of bitangents");
prdivcheck("2 D_0 = div(x + y + z)", D0 + D0, x + y + z, 1);
prdivcheck("2 D_1 = div(x - y + z)", D1 + D1, x - y + z, 1);
prdivcheck("2 D_2 = div(x + y - z)", D2 + D2, x + y - z, 1);
prdivcheck("2 D_3 = div(x - y - z)", D3 + D3, x - y - z, 1);
prdivcheck("D_0 + D_1 + D_2 + D_3 = div(X^2 + Y^2 + Z^2)",
  D0 + D1 + D2 + D3, x^2 + y^2 + z^2, 1);
prdivcheck("D_1 - D_0 = 2B_1 + 2B_2 - 4B_0 + div(...)",
  D1 + multdivisor(-1,D0) + multdivisor(-2,B1+B2) + multdivisor(4,B0),
  (x - z8*z) * (x - y + z),
  (x^2 + y^2 + z^2) + (d^5 - d^3 - d) * (y^2 - x*z));
prdivcheck("D_2 - D_0 = 2A_1 + 2A_2 + 2B_1 + 2B_2 - 8B_0 + div(...)",
  D2 + multdivisor(-1,D0) + multdivisor(-2,A1+A2+B1+B2) + multdivisor(8,B0),
  (x - z8*z)^2 * (x + y - z),
  (x^2 + y^2 + z^2) * (x + y) - (d^5 - d^3 - d) * z * (x^2 + x*y + y^2));
prdivcheck("D_3 - D_0 = 2A1 + 2A2 - 4B0 + div(...)",
  D3 + multdivisor(-1,D0) + multdivisor(-2,A1+A2) + multdivisor(4,B0),
  (x - z8*z) * (x - y - z),
  (x^2 - y^2 - 2*y*z - z^2) + (d^5 - d^3 - d) * (y^2 + y*z + z^2));
prdivcheck("E = 2B_2 - 2B_0",
  E + multdivisor(-2,B2) + B0+B0,
  1, 1);

divcheck("D1 - D0 = 2e_3 + 2e_4",
  D1 + multdivisor(-1,D0), E3+E3 + E4+E4);
divcheck("D2 - D0 = 2e_1 + 2e_2 + 2e_3 + 2e_4",
  D2 + multdivisor(-1,D0), E1+E1 + E2+E2 + E3+E3 + E4+E4);
divcheck("D3 - D0 = 2e_1 + 2e_2",
  D3 + multdivisor(-1,D0), E1+E1 + E2+E2);
divcheck("E = 2e_4", E, E4+E4);

// Action of sigma_3
print("");
print("Action of sigma_3");
number z8_3 = (d^3)^3; number z4_3 = z8_3^2;
divisor A0_3 = makeDivisor(ideal(x, z8_3^7 * y - z4_3^0 * z), ideal(1));
divisor A1_3 = makeDivisor(ideal(x, z8_3^7 * y - z4_3^1 * z), ideal(1));
divisor A2_3 = makeDivisor(ideal(x, z8_3^7 * y - z4_3^2 * z), ideal(1));
divisor A3_3 = makeDivisor(ideal(x, z8_3^7 * y - z4_3^3 * z), ideal(1));
divisor B0_3 = makeDivisor(ideal(y, z8_3^7 * x - z4_3^0 * z), ideal(1));
divisor B1_3 = makeDivisor(ideal(y, z8_3^7 * x - z4_3^1 * z), ideal(1));
divisor B2_3 = makeDivisor(ideal(y, z8_3^7 * x - z4_3^2 * z), ideal(1));
divisor B3_3 = makeDivisor(ideal(y, z8_3^7 * x - z4_3^3 * z), ideal(1));
divisor C0_3 = makeDivisor(ideal(z, x - z8_3 * z4_3^0 * y), ideal(1));
divisor C1_3 = makeDivisor(ideal(z, x - z8_3 * z4_3^1 * y), ideal(1));
divisor C2_3 = makeDivisor(ideal(z, x - z8_3 * z4_3^2 * y), ideal(1));
divisor C3_3 = makeDivisor(ideal(z, x - z8_3 * z4_3^3 * y), ideal(1));
divisor E1_3 = A1_3 + multdivisor(-1, B0_3);
divisor E2_3 = A2_3 + multdivisor(-1, B0_3);
divisor E3_3 = B1_3 + multdivisor(-1, B0_3);
divisor E4_3 = B2_3 + multdivisor(-1, B0_3);
divisor E5_3 = C1_3 + multdivisor(-1, B0_3);
divisor E6_3 = A1_3 + B1_3 + C1_3 + A2_3 + B2_3 + C2_3 + multdivisor(-6, B0_3);
divisor E_3 =
  multdivisor( 2, makeDivisor(ideal(z8_3^3 * x - z, y), ideal(1)))
+ multdivisor(-2, makeDivisor(ideal(z8_3^7 * x - z, y), ideal(1)));

divcheck("sigma_3(A_0) = A_1", A0_3, A1);
divcheck("sigma_3(A_1) = A_0", A1_3, A0);
divcheck("sigma_3(A_2) = A_3", A2_3, A3);
divcheck("sigma_3(A_3) = A_2", A3_3, A2);
divcheck("sigma_3(B_0) = B_1", B0_3, B1);
divcheck("sigma_3(B_1) = B_0", B1_3, B0);
divcheck("sigma_3(B_2) = B_3", B2_3, B3);
divcheck("sigma_3(B_3) = B_2", B3_3, B2);
divcheck("sigma_3(C_0) = C_1", C0_3, C1);
divcheck("sigma_3(C_1) = C_0", C1_3, C0);
divcheck("sigma_3(C_2) = C_3", C2_3, C3);
divcheck("sigma_3(C_3) = C_2", C3_3, C2);

divcheck("sigma_3(e1) = 2e_1 + e_2 + e_3 + e_4", E1_3, E1+E1 + E2 + E3 + E4);
divcheck("sigma_3(e2) = e_1 + 2e_2 + e_3 + 3e_4", E2_3, E1 + E2+E2 + E3 + E4+E4+E4);
divcheck("sigma_3(e3) = 3e_3", E3_3, E3+E3+E3);
divcheck("sigma_3(e4) = 2e_3 + 3e_4", E4_3, E3+E3 + E4+E4+E4);
divcheck("sigma_3(e5) = 3e_1 + 3e_2 + e_5 + e_6", E5_3, E1+E1+E1 + E2+E2+E2 + E5 + E6);
divcheck("sigma_3(e6) = 2e_3 + e_6", E6_3, E3+E3 + E6);

// Action of sigma_5
print("");
print("Action of sigma_5");
number z8_5 = (d^3)^5; number z4_5 = z8_5^2;
divisor A0_5 = makeDivisor(ideal(x, z8_5^7 * y - z4_5^0 * z), ideal(1));
divisor A1_5 = makeDivisor(ideal(x, z8_5^7 * y - z4_5^1 * z), ideal(1));
divisor A2_5 = makeDivisor(ideal(x, z8_5^7 * y - z4_5^2 * z), ideal(1));
divisor A3_5 = makeDivisor(ideal(x, z8_5^7 * y - z4_5^3 * z), ideal(1));
divisor B0_5 = makeDivisor(ideal(y, z8_5^7 * x - z4_5^0 * z), ideal(1));
divisor B1_5 = makeDivisor(ideal(y, z8_5^7 * x - z4_5^1 * z), ideal(1));
divisor B2_5 = makeDivisor(ideal(y, z8_5^7 * x - z4_5^2 * z), ideal(1));
divisor B3_5 = makeDivisor(ideal(y, z8_5^7 * x - z4_5^3 * z), ideal(1));
divisor C0_5 = makeDivisor(ideal(z, x - z8_5 * z4_5^0 * y), ideal(1));
divisor C1_5 = makeDivisor(ideal(z, x - z8_5 * z4_5^1 * y), ideal(1));
divisor C2_5 = makeDivisor(ideal(z, x - z8_5 * z4_5^2 * y), ideal(1));
divisor C3_5 = makeDivisor(ideal(z, x - z8_5 * z4_5^3 * y), ideal(1));
divisor E1_5 = A1_5 + multdivisor(-1, B0_5);
divisor E2_5 = A2_5 + multdivisor(-1, B0_5);
divisor E3_5 = B1_5 + multdivisor(-1, B0_5);
divisor E4_5 = B2_5 + multdivisor(-1, B0_5);
divisor E5_5 = C1_5 + multdivisor(-1, B0_5);
divisor E6_5 = A1_5 + B1_5 + C1_5 + A2_5 + B2_5 + C2_5 + multdivisor(-6, B0_5);
divisor E_5 =
    multdivisor( 2, makeDivisor(ideal(z8_5^3 * x - z, y), ideal(1)))
  + multdivisor(-2, makeDivisor(ideal(z8_5^7 * x - z, y), ideal(1)));

divcheck("sigma_5(A_0) = A_2", A0_5, A2);
divcheck("sigma_5(A_1) = A_3", A1_5, A3);
divcheck("sigma_5(A_2) = A_0", A2_5, A0);
divcheck("sigma_5(A_3) = A_1", A3_5, A1);
divcheck("sigma_5(B_0) = B_2", B0_5, B2);
divcheck("sigma_5(B_1) = B_3", B1_5, B3);
divcheck("sigma_5(B_2) = B_0", B2_5, B0);
divcheck("sigma_5(B_3) = B_1", B3_5, B1);
divcheck("sigma_5(C_0) = C_2", C0_5, C2);
divcheck("sigma_5(C_1) = C_3", C1_5, C3);
divcheck("sigma_5(C_2) = C_0", C2_5, C0);
divcheck("sigma_5(C_3) = C_1", C3_5, C1);

divcheck("sigma_5(e1) = e_1 + 2e_2 + 2e_3 + 2e_4", E1_5, E1 + E2+E2 + E3+E3 + E4+E4);
divcheck("sigma_5(e2) = 2e_1 + e_2 + 2e_3", E2_5, E1+E1 + E2 + E3+E3);
divcheck("sigma_5(e3) = 3e_3 + 2e_4", E3_5, E3+E3+E3 + E4+E4);
divcheck("sigma_5(e4) = 3e_4", E4_5, E4+E4+E4);
divcheck("sigma_5(e5) = 2e_1 + 2e_2 + 3e_5", E5_5, E1+E1 + E2+E2 + E5+E5+E5);
divcheck("sigma_5(e6) = 2e_4 + e_6", E6_5, E4+E4 + E6);

// Calculation of fixed points
print("");
print("Calculation of fixed points");
divcheck("(sigma_5 - 1)[A0] = 2e_1 + 2e_3 + 3e_4", A2 + multdivisor(-1,A0),
  E1+E1 + E3+E3 + E4+E4+E4);
divcheck("(sigma_3 - 1)[A0] = 3e_1 + 3e_2 + 2e_3 + 3e_4", A1 + multdivisor(-1,A0),
  E1+E1+E1 + E2+E2+E2 + E3+E3 + E4+E4+E4);
divcheck("(sigma_3 sigma_5 - 1)[A0] = 3e_1 + e_2 + 2e_4", A3 + multdivisor(-1,A0),
  E1+E1+E1 + E2 + E4+E4);

// Calculation of Brauer obstruction
print("");
print("Calculation of Brauer obstruction");
prdivcheck("2E = div((X - z8^5 * Z)/(X - z8 * Z)",
  E+E, x - z8^5 * z, x - z8 * z);
prdivcheck("E + sigma_3(E) = div(Y^2/((X - z8 * Z) * (X - z8^3 * Z)))",
  E + E_3, y^2, (x - z8 * z) * (x - z8^3 * z));
prdivcheck("E - sigma_3(E) = div(Y^2/((X - z8 * Z) * (X - z8^7 * Z)))",
  E + multdivisor(-1, E_3), y^2, (x - z8 * z) * (x - z8^7 * z));

quit;
\end{lstlisting}

If the above program is executed successfully,
it outputs the following message.
In the output, ``OK'' means the formula in this paper is correct.

\begin{lstlisting}[basicstyle=\tiny\ttfamily, numbers=left, frame=single]
                     SINGULAR                                 /  Development
 A Computer Algebra System for Polynomial Computations       /   version 4.2.1
                                                           0<
 by: W. Decker, G.-M. Greuel, G. Pfister, H. Schoenemann     \   May 2021
FB Mathematik der Universitaet, D-67653 Kaiserslautern        \
// ** loaded /usr/local/bin/../share/singular/LIB/divisors.lib (4.2.0.1,Jan_2021)
// ** loaded (builtin) customstd.so 
// ** loaded (builtin) gfanlib.so 
// ** loaded /usr/local/bin/../share/singular/LIB/primdec.lib (4.2.0.0,Mar_2021)
// ** loaded /usr/local/bin/../share/singular/LIB/ring.lib (4.1.2.0,Feb_2019)
// ** loaded /usr/local/bin/../share/singular/LIB/absfact.lib (4.1.2.0,Feb_2019)
// ** loaded /usr/local/bin/../share/singular/LIB/triang.lib (4.1.2.0,Feb_2019)
// ** loaded /usr/local/bin/../share/singular/LIB/matrix.lib (4.1.2.0,Feb_2019)
// ** loaded /usr/local/bin/../share/singular/LIB/nctools.lib (4.1.2.0,Feb_2019)
// ** loaded /usr/local/bin/../share/singular/LIB/inout.lib (4.1.2.0,Feb_2019)
// ** loaded /usr/local/bin/../share/singular/LIB/random.lib (4.1.2.0,Feb_2019)
// ** loaded /usr/local/bin/../share/singular/LIB/polylib.lib (4.2.0.0,Dec_2020)
// ** loaded /usr/local/bin/../share/singular/LIB/elim.lib (4.1.2.0,Feb_2019)
// ** loaded /usr/local/bin/../share/singular/LIB/general.lib (4.1.2.0,Feb_2019)

Check linear equivalences of divisors
alpha_0 = 2e_1 + e_2 + 2e_3 + e_4 : OK
alpha_1 = e_1 : OK
alpha_2 = e_2 : OK
alpha_3 = e_1 + 2e_2 + 2e_3 + 4e_4 : OK
beta_1 = e_3 : OK
beta_2 = e_4 : OK
beta_3 = 3e_3 + 3e_4 : OK
gamma_0 = 3e_1 + 3e_2 + e_3 + e_5 + e_6 : OK
gamma_1 = e_5 : OK
gamma_2 = 3e_1 + 3e_2 + 3e_3 + 3e_4 + 3e_5 + e_6 : OK
gamma_3 = 2e_1 + 2e_2 + e_4 + 3e_5 : OK

Points of tangency of bitangents
2 D_0 = div(x + y + z) : OK
2 D_1 = div(x - y + z) : OK
2 D_2 = div(x + y - z) : OK
2 D_3 = div(x - y - z) : OK
D_0 + D_1 + D_2 + D_3 = div(X^2 + Y^2 + Z^2) : OK
D_1 - D_0 = 2B_1 + 2B_2 - 4B_0 + div(...) : OK
D_2 - D_0 = 2A_1 + 2A_2 + 2B_1 + 2B_2 - 8B_0 + div(...) : OK
D_3 - D_0 = 2A1 + 2A2 - 4B0 + div(...) : OK
E = 2B_2 - 2B_0 : OK
D1 - D0 = 2e_3 + 2e_4 : OK
D2 - D0 = 2e_1 + 2e_2 + 2e_3 + 2e_4 : OK
D3 - D0 = 2e_1 + 2e_2 : OK
E = 2e_4 : OK

Action of sigma_3
sigma_3(A_0) = A_1 : OK
sigma_3(A_1) = A_0 : OK
sigma_3(A_2) = A_3 : OK
sigma_3(A_3) = A_2 : OK
sigma_3(B_0) = B_1 : OK
sigma_3(B_1) = B_0 : OK
sigma_3(B_2) = B_3 : OK
sigma_3(B_3) = B_2 : OK
sigma_3(C_0) = C_1 : OK
sigma_3(C_1) = C_0 : OK
sigma_3(C_2) = C_3 : OK
sigma_3(C_3) = C_2 : OK
sigma_3(e1) = 2e_1 + e_2 + e_3 + e_4 : OK
sigma_3(e2) = e_1 + 2e_2 + e_3 + 3e_4 : OK
sigma_3(e3) = 3e_3 : OK
sigma_3(e4) = 2e_3 + 3e_4 : OK
sigma_3(e5) = 3e_1 + 3e_2 + e_5 + e_6 : OK
sigma_3(e6) = 2e_3 + e_6 : OK

Action of sigma_5
sigma_5(A_0) = A_2 : OK
sigma_5(A_1) = A_3 : OK
sigma_5(A_2) = A_0 : OK
sigma_5(A_3) = A_1 : OK
sigma_5(B_0) = B_2 : OK
sigma_5(B_1) = B_3 : OK
sigma_5(B_2) = B_0 : OK
sigma_5(B_3) = B_1 : OK
sigma_5(C_0) = C_2 : OK
sigma_5(C_1) = C_3 : OK
sigma_5(C_2) = C_0 : OK
sigma_5(C_3) = C_1 : OK
sigma_5(e1) = e_1 + 2e_2 + 2e_3 + 2e_4 : OK
sigma_5(e2) = 2e_1 + e_2 + 2e_3 : OK
sigma_5(e3) = 3e_3 + 2e_4 : OK
sigma_5(e4) = 3e_4 : OK
sigma_5(e5) = 2e_1 + 2e_2 + 3e_5 : OK
sigma_5(e6) = 2e_4 + e_6 : OK

Calculation of fixed points
(sigma_5 - 1)[A0] = 2e_1 + 2e_3 + 3e_4 : OK
(sigma_3 - 1)[A0] = 3e_1 + 3e_2 + 2e_3 + 3e_4 : OK
(sigma_3 sigma_5 - 1)[A0] = 3e_1 + e_2 + 2e_4 : OK

Calculation of Brauer obstruction
2E = div((X - z8^5 * Z)/(X - z8 * Z) : OK
E + sigma_3(E) = div(Y^2/((X - z8 * Z) * (X - z8^3 * Z))) : OK
E - sigma_3(E) = div(Y^2/((X - z8 * Z) * (X - z8^7 * Z))) : OK
Auf Wiedersehen.
\end{lstlisting}

\subsection*{Acknowledgements}

The authors would like to thank
Brendan Creutz for comments on
the first version of this paper.
He kindly informed that
he constructed plane quartics over $\Q$
for which
the degree $0$ part of the Picard group is strictly smaller than
the Mordell--Weil group of the Jacobian variety; see \cite[Theorem 6.3, Remark 6.4]{Creutz} for details.

The work of the first author was supported by JSPS KAKENHI Grant Number
16K17572 and 21K13773.
The work of the second author was supported by JSPS
KAKENHI Grant Number 20674001, 26800013, 21H00973.
The work of the third author was
supported by JSPS KAKENHI Grant Number 
18H05233 and 20K14295. 

In the course of writing this paper,
we calculated linear equivalences of divisors on
the quartic $X^4 + Y^4 + Z^4 = 0$
using Singular \cite{Singular} and \texttt{divisors.lib} \cite{Singular:DivisorsLib}.

\bibliographystyle{amsplain}

\end{document}